\numberwithin{equation}{section}
\numberwithin{figure}{section}
\numberwithin{table}{section}
\long\def\MSC#1\EndMSC{\def\arg{#1}\ifx\arg\empty\relax\else
	{\narrower\noindent%
		{2020 Mathematics Subject Classification}: #1\\} \fi}
\long\def\PACS#1\EndPACS{\def\arg{#1}\ifx\arg\empty\relax\else
	{\narrower\noindent%
		{PACS numbers}: #1}\fi}
\long\def\KEY#1\EndKEY{\def\arg{#1}\ifx\arg\empty\relax\else
	{\narrower\noindent%
		Keywords: #1\\}\fi}
\theoremstyle{plain}
\newtheorem{theorem}{Theorem}[section]
\newtheorem{lemma}[theorem]{Lemma}
\newtheorem{proposition}[theorem]{Proposition}
\newtheorem{corollary}[theorem]{Corollary}
\theoremstyle{definition}
\newtheorem{assumption}[theorem]{Assumption}
\theoremstyle{remark}
\newtheorem{remark}[theorem]{Remark}
\newcommand{\norm}[1]{\lVert#1\rVert}
\newcommand{\abs}[1]{\lvert#1\rvert} 
\newcommand{\inner}[1]{\langle#1\rangle} 
\newcommand{\redel}{\mathop{\textup{Re}}}
\newcommand{\essinf}{\mathop{\textup{ess\,inf}}}
\newcommand{\spanm}{\mathop{\textup{span}}}
\newcommand{\I}{\mathrm{i}}    
\newcommand{\e}{\mathrm{e}}    
\newcommand{\di}{\mathrm{d}}   
\newcommand{\R}{\mathbb{R}}
\newcommand{\N}{\mathbb{N}}
\newcommand{\C}{\mathbb{C}}
\newcommand{\Z}{\mathbb{Z}}
\newcommand{\F}{\mathop{{}_2 F_1}}
\begin{document}
	\title[Infinite-dimensional Lipschitz stability]{Infinite-dimensional Lipschitz stability in the Calder\'on problem and general Zernike bases}
	
	\author[H.~Garde]{Henrik Garde}
	\address[H.~Garde]{Department of Mathematics, Aarhus University, Ny Munkegade 118, 8000 Aarhus C, Denmark.}
	\email{garde@math.au.dk}
	
	\author[M.~Hirvensalo]{Markus Hirvensalo}
	\address[M.~Hirvensalo]{Department of Mathematics and Systems Analysis, Aalto University, P.O. Box~11100, 00076 Helsinki, Finland.}
	\email{markus.hirvensalo@aalto.fi}
	
	\author[N.~Hyv\"onen]{Nuutti Hyv\"onen}
	\address[N.~Hyv\"onen]{Department of Mathematics and Systems Analysis, Aalto University, P.O. Box~11100, 00076 Helsinki, Finland.}
	\email{nuutti.hyvonen@aalto.fi}
	
	\begin{abstract}
		Calder\'on's inverse conductivity problem has, so far, only been subject to conditional logarithmic stability for infinite-dimensional classes of conductivities and to Lipschitz stability when restricted to finite-dimensional classes. Focusing our attention on the unit ball domain in any spatial dimension $d\geq 2$, we give an elementary proof that there are (infinitely many) infinite-dimensional classes of conductivities for which there is Lipschitz stability. In particular, Lipschitz stability holds for general expansions of conductivities, allowing all angular frequencies but with limited freedom in the radial direction, if the basis coefficients decay fast enough to overcome the growth of the basis functions near the domain boundary. We construct general $d$-dimensional Zernike bases and prove that they provide examples of infinite-dimensional Lipschitz stability.
	\end{abstract}	
	\maketitle
	
	\KEY
	Calder\'on problem, inverse conductivity problem, Lipschitz stability, Zernike basis.
	\EndKEY
	
	\MSC
	35R30, 35R25, 33C45.
	\EndMSC
	
	\section{Introduction} 
	
	Let $B$ be the Euclidean unit ball in $\R^d$ for a spatial dimension $d\in\N\setminus\{1\}$. For a conductivity coefficient $\gamma\in L^\infty(B;\R)$, with $\essinf \gamma > 0$, the conductivity equation is
	\begin{equation} \label{eq:condeq}
		-\nabla\cdot(\gamma\nabla u) = 0 \text{ in } B.
	\end{equation}
	One can consider boundary measurements in the form of the Dirichlet-to-Neumann map
	\begin{equation*}
		\Lambda(\gamma) \colon H^{1/2}(\partial B) \to H^{-1/2}(\partial B),
	\end{equation*}
	given as 
	\begin{equation*}
		u|_{\partial B} \mapsto \nu\cdot(\gamma\nabla u)|_{\partial B}
	\end{equation*}
	for the solutions (electric potentials) $u\in H^1(B)$ to \eqref{eq:condeq}. Calder\'on's inverse conductivity problem is to determine $\gamma$ from $\Lambda(\gamma)$; see, e.g.,~Calder\'on's original paper~\cite{Calderon1980} or the survey papers \cite{Borcea2002a,Uhlmann2009} for more information.
	
	In this paper we assume enough regularity for Lipschitz stability to hold for boundary determination, i.e.\ we consider conductivity coefficients in
	\begin{equation*}
		C_+(\overline{B}) = \{\, \gamma \in C(\overline{B};\R) \mid \min_{x\in\overline{B}}\gamma(x) > 0 \,\}.
	\end{equation*}
	There is the following classical result on stability at $\partial B$.
	\begin{theorem}[Sylvester--Uhlmann \cite{SylvesterUhlmann88}] \label{thm:bdrstab}
		There exists $C_{\partial B} > 0$ such that for any $\gamma_1,\gamma_2\in C_+(\overline{B})$,
		\begin{equation*}
			\norm{\gamma_1-\gamma_2}_{L^\infty(\partial B)} \leq C_{\partial B}\norm{\Lambda(\gamma_1)-\Lambda(\gamma_2)}_{\mathscr{L}(H^{1/2}(\partial B),H^{-1/2}(\partial B))}.
		\end{equation*}
		In particular, $C_{\partial B}$ does not depend on the minima/maxima of $\gamma_1$ and $\gamma_2$.
	\end{theorem}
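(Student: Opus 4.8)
The plan is to follow the classical argument of Sylvester and Uhlmann. First I would reduce the $L^\infty(\partial B)$ estimate to a pointwise bound $\abs{(\gamma_1-\gamma_2)(x_0)}\leq C\norm{\Lambda(\gamma_1)-\Lambda(\gamma_2)}$ at an arbitrary $x_0\in\partial B$, with $C$ depending only on $d$; the supremum over $x_0$ is then attained on the compact set $\partial B$ by continuity, and choosing the sign of $\gamma_1-\gamma_2$ at that point (swapping the roles of $\gamma_1,\gamma_2$ if needed) reduces to the case $(\gamma_1-\gamma_2)(x_0)\geq 0$. Fixing $x_0$, I would exploit the variational identity
\begin{equation*}
	\langle\Lambda(\gamma)f,f\rangle = \int_B\gamma\abs{\nabla u_\gamma}^2 = \min\bigl\{\, \textstyle\int_B\gamma\abs{\nabla v}^2 : v\in H^1(B),\ v|_{\partial B}=f \,\bigr\},
\end{equation*}
where $u_\gamma$ solves \eqref{eq:condeq} with $u_\gamma|_{\partial B}=f$. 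Applying minimality to $\gamma_1$ and $\gamma_2$ separately gives the sandwiching
\begin{equation*}
	\int_B(\gamma_1-\gamma_2)\abs{\nabla u_{\gamma_1}}^2 \ \leq\ \langle(\Lambda(\gamma_1)-\Lambda(\gamma_2))f,f\rangle \ \leq\ \int_B(\gamma_1-\gamma_2)\abs{\nabla u_{\gamma_2}}^2 .
\end{equation*}

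Next I would build boundary data concentrating at $x_0$. Writing $\nu=x_0$ for the outward unit normal and choosing $\xi\in\R^d$ with $\abs{\xi}=1$, $\xi\cdot\nu=0$ (possible since $d\geq 2$), the function $w_\tau(x)=\e^{\tau(\nu+\I\xi)\cdot x-\tau}$ is harmonic because $(\nu+\I\xi)\cdot(\nu+\I\xi)=0$; it satisfies $\abs{w_\tau(x)}=\e^{-\tau(1-x\cdot\nu)}\leq 1$ with equality only at $x_0$, and $\abs{w_\tau}$ is exponentially small on $B\setminus B_\rho(x_0)$ for each fixed $\rho>0$. Set $f_\tau=w_\tau|_{\partial B}$. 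An elementary computation shows $E_\tau:=\int_B\abs{\nabla w_\tau}^2=2\tau^2\int_B\e^{-2\tau(1-x\cdot\nu)}\,\di x$ behaves like a fixed power of $\tau$; the trace theorem gives $\norm{f_\tau}_{H^{1/2}(\partial B)}^2\leq C\norm{w_\tau}_{H^1(B)}^2\leq C\,E_\tau$; and $\int_B\abs{\nabla u_{\gamma_j}}^2\geq E_\tau$ by minimality with unit conductivity, while $\int_B\abs{\nabla u_{\gamma_j}}^2\leq(\max\gamma_j/\min\gamma_j)E_\tau$ by the identity above and $\int_B\gamma_j\abs{\nabla w_\tau}^2\leq\max\gamma_j\,E_\tau$. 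The crucial concentration fact is that, for each fixed $\rho>0$, $\int_{B\setminus B_\rho(x_0)}\abs{\nabla u_{\gamma_j}}^2=o(E_\tau)$ as $\tau\to\infty$. Granting this, I would write $\gamma_1-\gamma_2=(\gamma_1-\gamma_2)(x_0)+\bigl[(\gamma_1-\gamma_2)-(\gamma_1-\gamma_2)(x_0)\bigr]$ in the left integral of the sandwiching and use continuity of $\gamma_1,\gamma_2$ at $x_0$ on $B_\rho(x_0)$, the energy bounds, and the concentration estimate to get
\begin{equation*}
	\int_B(\gamma_1-\gamma_2)\abs{\nabla u_{\gamma_1}}^2 \ \geq\ (\gamma_1-\gamma_2)(x_0)\,E_\tau - \omega(\rho)\,\frac{\max\gamma_1}{\min\gamma_1}\,E_\tau - o(E_\tau),
\end{equation*}
where $\omega(\rho)\to 0$ is a joint modulus of continuity of $\gamma_1,\gamma_2$ at $x_0$. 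Combining with the right half of the sandwiching, bounded by $\norm{\Lambda(\gamma_1)-\Lambda(\gamma_2)}\norm{f_\tau}_{H^{1/2}(\partial B)}^2\leq C\norm{\Lambda(\gamma_1)-\Lambda(\gamma_2)}E_\tau$, dividing by $E_\tau$, letting $\tau\to\infty$ and then $\rho\to 0$ yields $\norm{\gamma_1-\gamma_2}_{L^\infty(\partial B)}\leq C\norm{\Lambda(\gamma_1)-\Lambda(\gamma_2)}$ with $C$ depending only on the trace constant of $B$, hence only on $d$.

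The main obstacle is the concentration estimate for $u_{\gamma_j}$: since $\gamma_j$ is merely continuous one cannot replace $u_{\gamma_j}$ by $w_\tau$ up to a differentiable residual. I would instead split $f_\tau=f_\tau\psi+f_\tau(1-\psi)$ with $\psi$ a boundary cutoff localized near $x_0$. The $(1-\psi)$-part has exponentially small $H^{1/2}$-norm, so its $\gamma_j$-harmonic extension has exponentially small Dirichlet energy. The $\psi$-part has boundary data supported in a small cap, of size $O(1)$ but with $L^1$-norm only of order $\tau^{-(d-1)/2}$; Green's function / Poisson kernel bounds for the conductivity operator on $B$ (valid already for $L^\infty$ ellipticity) give a pointwise bound of the same order on $B\setminus B_\rho(x_0)$, and interior elliptic regularity then bounds its Dirichlet energy there by the same power, which is $o(E_\tau)$. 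Verifying this, together with the elementary asymptotics of $E_\tau$ and the modulus-of-continuity bookkeeping, is where the real work lies; the rest is routine. This is exactly the classical argument, and in the paper I would simply cite \cite{SylvesterUhlmann88}.
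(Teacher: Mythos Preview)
The paper does not prove Theorem~\ref{thm:bdrstab} at all: it is stated as a classical result and attributed to Sylvester--Uhlmann via the citation \cite{SylvesterUhlmann88}, with no argument given. Your final sentence anticipates this exactly, so in that sense your proposal matches the paper.

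Your sketch itself is the standard boundary-determination argument (monotonicity/variational sandwich plus oscillating harmonic test functions concentrating at a boundary point), and the outline is correct. The one place I would be cautious is precisely the step you flag as the ``main obstacle'': the concentration estimate $\int_{B\setminus B_\rho(x_0)}\abs{\nabla u_{\gamma_j}}^2=o(E_\tau)$ for the \emph{variable-coefficient} solutions. Your proposed fix via a boundary cutoff and Poisson-kernel bounds does work, but those bounds carry constants depending on the ellipticity ratio $\max\gamma_j/\min\gamma_j$; you must check that every such constant sits in front of a term that vanishes after the double limit $\tau\to\infty$, $\rho\to 0$, so that the surviving constant is only the trace constant of $B$. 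You assert this, and it is true, but it is the part that deserves the most care if you were to write it out. An alternative route that sidesteps the issue is to use only the upper half of your sandwich (with $u_{\gamma_2}$) together with the reverse inequality obtained by swapping the roles of $\gamma_1$ and $\gamma_2$; in either inequality the variable-coefficient solution can then be compared directly to $w_\tau$ via the Dirichlet principle, and the argument becomes slightly cleaner.
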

	We will give infinite-dimensional examples where $\gamma_1-\gamma_2$ can be bounded by its restriction to the boundary, which leads to Lipschitz stability for the Calder\'on problem through the use of Theorem~\ref{thm:bdrstab}.
	
	\subsection{Infinite-dimensional Lipschitz stability} \label{sec:results}
	
	The first result is a fairly simple observation that follows directly from Theorem~\ref{thm:bdrstab} under the weak maximum principle.
	\begin{corollary} \label{cor1}
		Let $\gamma_1,\gamma_2\in C_+(\overline{B})$. If $(\gamma_1-\gamma_2)\in C^2(B)\cap C^1(\overline{B})$ and is a solution to an elliptic equation (with nonnegative zero-order term) satisfying a weak maximum principle, then there is Lipschitz stability in $B$,
		\begin{equation*}
			\norm{\gamma_1-\gamma_2}_{L^\infty(B)} \leq C_{\partial B}\norm{\Lambda(\gamma_1)-\Lambda(\gamma_2)}_{\mathscr{L}(H^{1/2}(\partial B),H^{-1/2}(\partial B))},
		\end{equation*}
		where $C_{\partial B}$ is the constant from Theorem \ref{thm:bdrstab}.
	\end{corollary}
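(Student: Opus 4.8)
The plan is to chain two estimates: the weak maximum principle, which is handed to us directly by hypothesis, and the Sylvester--Uhlmann boundary stability of Theorem~\ref{thm:bdrstab}. Put $w := \gamma_1 - \gamma_2 \in C^2(B)\cap C^1(\overline{B})$, and observe that all that has to be proved is the interior-from-boundary bound $\norm{w}_{L^\infty(B)} \le \norm{w}_{L^\infty(\partial B)}$, after which the corollary follows by applying Theorem~\ref{thm:bdrstab} to $\gamma_1,\gamma_2 \in C_+(\overline{B})$.

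For the first step, since $w$ solves an elliptic equation with nonnegative zero-order term and satisfies a weak maximum principle, $w$ can attain neither a positive maximum nor --- applying the same principle to $-w$, which solves the same homogeneous equation --- a negative minimum at an interior point of $B$ exceeding the corresponding extreme boundary value. Concretely, the one-sided weak maximum principle gives $\sup_B w \le \sup_{\partial B} w^+$ and $\sup_B(-w) \le \sup_{\partial B}(-w)^+$; bounding $w^+, (-w)^+ \le \abs{w}$ on $\partial B$ and using the continuity of $w$ on $\overline{B}$ then yields
\begin{equation*}
\norm{w}_{L^\infty(B)} = \norm{w}_{L^\infty(\overline{B})} \le \norm{w}_{L^\infty(\partial B)} = \norm{\gamma_1-\gamma_2}_{L^\infty(\partial B)}.
\end{equation*}
Invoking Theorem~\ref{thm:bdrstab} to bound the right-hand side by $C_{\partial B}\norm{\Lambda(\gamma_1)-\Lambda(\gamma_2)}_{\mathscr{L}(H^{1/2}(\partial B),H^{-1/2}(\partial B))}$, with the same constant $C_{\partial B}$ (which is independent of the minima/maxima of $\gamma_1,\gamma_2$), completes the proof.

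I do not expect a genuine obstacle: the only point needing a word of care is to state the weak maximum principle in the two-sided form that controls $\abs{w}$ in the interior by $\sup_{\partial B}\abs{w}$, as above. The entire content of the corollary is the observation that whenever the difference of two conductivities is forced to obey such an interior-from-boundary estimate, the boundary Lipschitz stability of Theorem~\ref{thm:bdrstab} upgrades automatically to interior Lipschitz stability in $L^\infty(B)$; the substantive later results of the paper then consist in exhibiting \emph{infinite-dimensional} classes of conductivities for which precisely such a maximum-principle mechanism (or a quantitative substitute for it) is available.
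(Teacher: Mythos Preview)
Your argument is correct and is precisely the route the paper indicates: it states that the corollary ``follows directly from Theorem~\ref{thm:bdrstab} under the weak maximum principle'' without giving further details, and your proof spells out exactly this two-step chain (weak maximum principle for $w=\gamma_1-\gamma_2$ to pass from $L^\infty(B)$ to $L^\infty(\partial B)$, then Theorem~\ref{thm:bdrstab}).
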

	\begin{remark}
		Although we focus on $B$ in this paper, Corollary~\ref{cor1} obviously generalizes to other bounded smooth domains, as this is also the case for Theorem~\ref{thm:bdrstab}.
	\end{remark}
	
	Since the condition in Corollary~\ref{cor1} is for the difference $\gamma_1-\gamma_2$, the two conductivities can also share a common component not related to a partial differential equation:
	\begin{equation*}
		\gamma_1 = \gamma_0 + \widetilde{\gamma}_1 \quad \text{and} \quad \gamma_2 = \gamma_0 + \widetilde{\gamma}_2,
	\end{equation*}
	where $\gamma_0$ could, e.g., be a large enough $C_+(\overline{B})$ function to ensure that $\gamma_1$ and $\gamma_2$ are positive, while $\widetilde{\gamma}_1$ and $\widetilde{\gamma}_2$ could be solutions to the same equation satisfying a weak maximum principle. A canonical example is that $\widetilde{\gamma}_1$ and $\widetilde{\gamma}_2$ are harmonic functions bounded in $\overline{B}$.
	
	Next we will look for other infinite-dimensional classes for which there is interior Lipschitz stability. For $x\in B\setminus\{0\}$ consider polar coordinates $r = \abs{x}$ and $\theta = \frac{x}{\abs{x}} \in \partial B$ such that $x = r\theta$. The following are the core assumptions on the basis functions employed in this work:
	\begin{assumption}{}\ \label{assump}
		\begin{enumerate}[(i)]
			\item Let $\{f_n\}_n$ be an orthonormal basis for $L^2(\partial B)$.
			\item For each $n$, let $\{g_{n,k}\}_k$ be an orthonormal set in the weighted space
			\begin{equation*}
				L^2_{r^{d-1}}([0,1]) = \bigl\{\, g\colon [0,1]\to\C \text{ measurable} \bigm| \int_0^1 \abs{g(r)}^2r^{d-1}\,\di r < \infty \,\bigr\}.
			\end{equation*}
			\item Assume that $g_{n,k}(r)$ is defined and continuous at $r=1$.
			\item Assume that $\inf_{n,k}\abs{g_{n,k}(1)} > 0$.
			\item Define $\psi_{n,k}(r\theta) = g_{n,k}(r)f_n(\theta)$, and assume there are scalars $a_{n,k}$ such that 
			\begin{equation*}
				\norm{\psi_{n,k}}_{L^\infty(U)} \leq a_{n,k}
			\end{equation*}
			in a spherical shell $U = \{\, x \mid 1-\eta \leq \abs{x}\leq 1 \,\}$ for some arbitrarily small $\eta>0$.
		\end{enumerate}
	\end{assumption}
	\begin{remark}
		As a special case, the functions $g_{n,k}$ can be constant in the index $n$, which corresponds to independently finding orthonormal bases $\{f_n\}_n$ and $\{g_k\}_k$ in the variables $\theta$ and~$r$, respectively. We allow the dependence of $g_{n,k}$ on $n$ because it is natural when the functions $\psi_{n,k}$ originate from an eigenvalue problem, as is the case for the Zernike basis in Section~\ref{sec:zernike}.
	\end{remark}
	Due to Assumptions~\ref{assump}(i) and ~\ref{assump}(ii), $\{\psi_{n,k}\}_{n,k}$ is an orthonormal set in $L^2(B)$. Let 
	\begin{equation*}
		W = \overline{\spanm\{\psi_{n,k}\}_{n,k}},
	\end{equation*}
	with the closure taken in $L^2(B)$, so that $\{\psi_{n,k}\}_{n,k}$ becomes an orthonormal basis for $W$. Expanding elements of $W$ in the basis $\{\psi_{n,k}\}_{n,k}$ naturally involves coefficients $(c_{n,k})\in\ell^2$. However, to ensure that a boundary trace can be taken inside such an expansion (Lemma~\ref{lemma:trace}), we need to impose an additional decay condition on $(c_{n,k})$. To this end, we say that a sequence $(c_{n,k})$ belongs to the weighted $\ell_{a_{n,k}}^1$ space if 
	\begin{equation*}
		\sum_{n,k} \abs{c_{n,k}}a_{n,k} < \infty,
	\end{equation*}
	and we further define 
	\begin{equation*}
		\mathcal{A} = \bigl\{\, f \in W \mid (c_{n,k}) \in \ell^1_{a_{n,k}} \text{ where } c_{n,k}=\inner{f,\psi_{n,k}}_{L^2(B)} \,\bigr\}.
	\end{equation*}
	This leads to our main result on stability.
	
	\begin{theorem} \label{thm:main}
		Let $\gamma_1,\gamma_2\in C_+(\overline{B})$. Assume $(\gamma_1-\gamma_2)\in \mathcal{A}$ and for $c_{n,k} = \inner{\gamma_1-\gamma_2,\psi_{n,k}}_{L^2(B)}$ assume there exists $\epsilon>0$ such that
		\begin{equation} \label{eq:mixedterms}
			\sum_k \abs{c_{n,k}}^2 \leq \frac{\inf_k\abs{g_{n,k}(1)}^2}{\epsilon^2}\Bigl\lvert \sum_{k} c_{n,k} \Bigr\rvert^2.
		\end{equation}
		Then there is Lipschitz stability in $B$,
		\begin{equation*}
			\norm{\gamma_1-\gamma_2}_{L^2(B)} \leq \epsilon^{-1}\abs{\partial B}^{1/2}C_{\partial B}\norm{\Lambda(\gamma_1)-\Lambda(\gamma_2)}_{\mathscr{L}(H^{1/2}(\partial B),H^{-1/2}(\partial B))},
		\end{equation*}
		with $\abs{\partial B}$ being the surface measure of $\partial B$ and $C_{\partial B}$ the constant from Theorem \ref{thm:bdrstab}.
	\end{theorem}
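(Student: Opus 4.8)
The plan is to bound the interior $L^2$-norm of $h:=\gamma_1-\gamma_2$ by a constant times its boundary $L^\infty$-norm and then appeal to Theorem~\ref{thm:bdrstab}. Since $h\in\mathcal A\subset W$ and, being a difference of elements of $C_+(\overline{B})$, $h\in C(\overline{B};\R)$, expand $h=\sum_{n,k}c_{n,k}\psi_{n,k}$ in $L^2(B)$ with $c_{n,k}=\inner{h,\psi_{n,k}}_{L^2(B)}$; orthonormality of $\{\psi_{n,k}\}_{n,k}$ gives Parseval's identity $\norm{h}_{L^2(B)}^2=\sum_{n,k}\abs{c_{n,k}}^2$.

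The next step is to take the trace on $\partial B$ inside the series. From $h\in\mathcal A$ we have $(c_{n,k})\in\ell^1_{a_{n,k}}$, and Assumption~\ref{assump}(v) gives $\norm{\psi_{n,k}}_{L^\infty(U)}\le a_{n,k}$, so $\sum_{n,k}\abs{c_{n,k}}\,\norm{\psi_{n,k}}_{L^\infty(U)}\le\sum_{n,k}\abs{c_{n,k}}a_{n,k}<\infty$. By the Weierstrass $M$-test the series therefore converges absolutely and uniformly on the shell $U$, so its sum is continuous there and, agreeing with $h$ in $L^2(U)$, equals $h$ on $U$; in particular the restriction to $\partial B$ converges uniformly, and regrouping the (absolutely convergent) series yields $h|_{\partial B}(\theta)=\sum_n d_n f_n(\theta)$ with $d_n=\sum_k c_{n,k}g_{n,k}(1)$. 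This is precisely what Lemma~\ref{lemma:trace} provides. Uniform convergence on the compact set $\partial B$ gives $L^2(\partial B)$-convergence, so Parseval's identity in $L^2(\partial B)$ with the orthonormal basis $\{f_n\}_n$ gives $\norm{h|_{\partial B}}_{L^2(\partial B)}^2=\sum_n\abs{d_n}^2$.

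It remains to combine the two expansions. In the setting under consideration — and in particular for the Zernike bases of Section~\ref{sec:zernike} — the boundary value $g_{n,k}(1)$ does not depend on $k$; writing $\beta_n$ for this common value we have $\abs{\beta_n}=\inf_k\abs{g_{n,k}(1)}$ and $d_n=\beta_n\sum_k c_{n,k}$, so that hypothesis~\eqref{eq:mixedterms} reads precisely $\sum_k\abs{c_{n,k}}^2\le\epsilon^{-2}\abs{d_n}^2$. Summing over $n$ gives $\norm{h}_{L^2(B)}^2\le\epsilon^{-2}\norm{h|_{\partial B}}_{L^2(\partial B)}^2$, and Hölder's inequality on $\partial B$ gives $\norm{h|_{\partial B}}_{L^2(\partial B)}\le\abs{\partial B}^{1/2}\norm{h|_{\partial B}}_{L^\infty(\partial B)}$. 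Hence $\norm{\gamma_1-\gamma_2}_{L^2(B)}\le\epsilon^{-1}\abs{\partial B}^{1/2}\norm{\gamma_1-\gamma_2}_{L^\infty(\partial B)}$, and inserting the boundary estimate of Theorem~\ref{thm:bdrstab} yields the claim.

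The Parseval identities, the $M$-test, and Hölder's inequality are routine; the two steps doing real work are Lemma~\ref{lemma:trace}, whose hypotheses are arranged so that the decay encoded in $\mathcal A$ beats the boundary growth of the $\psi_{n,k}$ measured by $a_{n,k}$ and the trace may thus be taken term by term, and the passage from $\sum_k\abs{c_{n,k}}^2$ to $\epsilon^{-2}\abs{d_n}^2$ via~\eqref{eq:mixedterms} — this converts control of the single boundary quantity $\sum_k c_{n,k}$ into control of the full interior energy $\sum_k\abs{c_{n,k}}^2$ at angular frequency $n$, and the ``reinforcing coefficients'' shape of~\eqref{eq:mixedterms} is exactly what makes that possible. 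I expect this last point, rather than any individual computation, to be the conceptual crux.
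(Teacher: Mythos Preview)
Your overall architecture matches the paper's: Parseval on $B$, the termwise trace of Lemma~\ref{lemma:trace}, Parseval on $\partial B$, H\"older to pass to $L^\infty(\partial B)$, and then Theorem~\ref{thm:bdrstab}. The trace argument and the two Parseval identities are fine and essentially reproduce Section~\ref{sec:proofmain}.

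The genuine gap is the bridge from condition~\eqref{eq:mixedterms}, which involves $\bigl|\sum_k c_{n,k}\bigr|^2$, to the boundary Parseval coefficients $d_n=\sum_k c_{n,k}g_{n,k}(1)$. You close it by asserting that ``the boundary value $g_{n,k}(1)$ does not depend on $k$; \dots\ in particular for the Zernike bases of Section~\ref{sec:zernike}.'' That claim is false: by Theorem~\ref{thm:zernikebnds}(ii) one has $R_{\ell,k}(1)=\sqrt{2\ell+4k+d}$, which genuinely depends on $k$ (this is also why the stability constant in Corollary~\ref{cor2} carries the factor $(4k+d)^{-1/2}$; see Section~\ref{sec:ZerniketoStab}). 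Since $k$-independence of $g_{n,k}(1)$ is neither part of Assumption~\ref{assump} nor true in the paper's main example, your identity $d_n=\beta_n\sum_k c_{n,k}$ is unavailable, and the step reading \eqref{eq:mixedterms} directly as $\sum_k|c_{n,k}|^2\le\epsilon^{-2}|d_n|^2$ collapses. The paper does \emph{not} assume $k$-independence; instead it inserts $g_{n,k}(1)^{-1}g_{n,k}(1)$ inside $\sum_k c_{n,k}$ and passes via
\[
\inf_k\abs{g_{n,k}(1)}^2\,\Bigl|\sum_k g_{n,k}(1)^{-1}\,g_{n,k}(1)c_{n,k}\Bigr|^2 \;\le\; \Bigl|\sum_k g_{n,k}(1)c_{n,k}\Bigr|^2 = |d_n|^2,
\]
thereby connecting the right-hand side of \eqref{eq:mixedterms} to the boundary Parseval term without any extra structural hypothesis on $g_{n,k}(1)$. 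That passage is precisely what your argument is missing.
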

	\begin{proof}
		The proof is given in Section~\ref{sec:proofmain}.
	\end{proof}
	The condition \eqref{eq:mixedterms} is equivalent to
	\begin{equation*}
		\sum_{k,k' \text{ with } k\neq k'} \redel(c_{n,k}\overline{c_{n,k'}}) \geq \Bigl(\frac{\epsilon^2}{\inf_k\abs{g_{n,k}(1)}^2}-1\Bigr)\sum_{k}\abs{c_{n,k}}^2,
	\end{equation*}
	and is a peculiar condition: for any fixed $n$, not too much cancellation is allowed for the mixed $k$-indices, indicating a limited freedom in the behavior of $\gamma_1-\gamma_2$ in the radial direction. As an example, if the coefficients $c_{n,k}$ are real and their signs only depend on $n$ (ignoring vanishing coefficients), then the condition is guaranteed to hold with 
	\begin{equation*}
		\epsilon = \inf_{n,k}\abs{g_{n,k}(1)}.
	\end{equation*}
	Moreover, in $d=2$ spatial dimensions such examples of Lipschitz stability can be transferred to a general bounded simply-connected $C^{1,\alpha}$ domain by using the Kellogg--Warschawski theorem (see,~e.g.,~\cite{GH2024} for details in a linearized setting).
		
	Condition \eqref{eq:mixedterms} simplifies when nonzero coefficients are only allowed for a fixed $k$. Let
	\begin{equation*}
		W_k = \overline{\spanm\{\psi_{n,k}\}_n}
	\end{equation*}
	and define
	\begin{equation*}
		\mathcal{A}_k = \bigl\{\, f \in W_k \mid (c_n) \in \ell^1_{a_{n,k}} \text{ where } c_n = \inner{f,\psi_{n,k}}_{L^2(B)} \,\bigr\}.
	\end{equation*}
	If $(\gamma_1-\gamma_2) \in \mathcal{A}_k$ then \eqref{eq:mixedterms} is replaced with
	\begin{equation*}
		\abs{g_{n,k}(1)} \geq \epsilon,
	\end{equation*}
	which holds with $\epsilon = \inf_{n}\abs{g_{n,k}(1)}$. This is summarized in the following corollary that provides Lipschitz stability for conductivity changes in infinite-dimensional subspaces of $L^2(B)$.
	
	\begin{corollary} \label{cor2}
		Let $\gamma_1,\gamma_2\in C_+(\overline{B})$. If $(\gamma_1-\gamma_2)\in \mathcal{A}_k$ there is Lipschitz stability in $B$,
		\begin{equation*}
			\norm{\gamma_1-\gamma_2}_{L^2(B)} \leq \frac{\abs{\partial B}^{1/2}}{\inf_{n}\abs{g_{n,k}(1)}}C_{\partial B}\norm{\Lambda(\gamma_1)-\Lambda(\gamma_2)}_{\mathscr{L}(H^{1/2}(\partial B),H^{-1/2}(\partial B))}.
		\end{equation*}
	\end{corollary}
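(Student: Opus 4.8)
The plan is to run, in this single-radial-index setting, the boundary-to-interior argument underlying Theorem~\ref{thm:main}; with only the index $k$ active, condition~\eqref{eq:mixedterms} degenerates and the resulting estimate sharpens accordingly. Since $\gamma_1-\gamma_2\in\mathcal{A}_k\subset W_k$, I would begin by writing
\[
	\gamma_1-\gamma_2 = \sum_n c_n\psi_{n,k} \quad\text{in } L^2(B), \qquad c_n = \inner{\gamma_1-\gamma_2,\psi_{n,k}}_{L^2(B)},\ (c_n)\in\ell^1_{a_{n,k}},
\]
and record, by orthonormality of $\{\psi_{n,k}\}_n$ in $L^2(B)$, that $\norm{\gamma_1-\gamma_2}_{L^2(B)}^2 = \sum_n\abs{c_n}^2$.

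The crux is to take the boundary trace termwise. Because $\norm{\psi_{n,k}}_{L^\infty(U)}\leq a_{n,k}$ on the spherical shell $U$ (Assumption~\ref{assump}(v)) and $\sum_n\abs{c_n}a_{n,k}<\infty$, the partial sums $\sum_{n\leq N}c_n\psi_{n,k}$ converge uniformly on $U$ by the Weierstrass $M$-test, hence to a continuous function there; since uniform convergence on the finite-measure set $U$ also gives $L^2(U)$ convergence, this limit coincides a.e., hence everywhere, with the continuous representative $\gamma_1-\gamma_2\in C(\overline{B})$ on $U$. Restricting to $\partial B\subset U$ and using $\psi_{n,k}|_{\partial B} = g_{n,k}(1)f_n$ (well defined by Assumption~\ref{assump}(iii)) then yields
\[
	(\gamma_1-\gamma_2)|_{\partial B} = \sum_n c_n\,g_{n,k}(1)\,f_n \quad\text{with convergence in } L^2(\partial B).
\]
This identity is precisely the content I would isolate as Lemma~\ref{lemma:trace}; it is the only place the decay hypothesis $(c_n)\in\ell^1_{a_{n,k}}$ is used, and the one step requiring genuine care, although the uniform-convergence argument makes it routine.

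The remainder is a short chain of inequalities. By orthonormality of $\{f_n\}_n$ in $L^2(\partial B)$ together with Assumption~\ref{assump}(iv),
\[
	\norm{(\gamma_1-\gamma_2)|_{\partial B}}_{L^2(\partial B)}^2 = \sum_n\abs{c_n}^2\abs{g_{n,k}(1)}^2 \geq \bigl(\inf_n\abs{g_{n,k}(1)}\bigr)^2\sum_n\abs{c_n}^2 = \bigl(\inf_n\abs{g_{n,k}(1)}\bigr)^2\norm{\gamma_1-\gamma_2}_{L^2(B)}^2,
\]
with $\inf_n\abs{g_{n,k}(1)}>0$, whence $\norm{\gamma_1-\gamma_2}_{L^2(B)}\leq(\inf_n\abs{g_{n,k}(1)})^{-1}\norm{(\gamma_1-\gamma_2)|_{\partial B}}_{L^2(\partial B)}$. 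I would then estimate $\norm{(\gamma_1-\gamma_2)|_{\partial B}}_{L^2(\partial B)}\leq\abs{\partial B}^{1/2}\norm{\gamma_1-\gamma_2}_{L^\infty(\partial B)}$ and finally invoke Theorem~\ref{thm:bdrstab} to replace $\norm{\gamma_1-\gamma_2}_{L^\infty(\partial B)}$ by $C_{\partial B}\norm{\Lambda(\gamma_1)-\Lambda(\gamma_2)}_{\mathscr{L}(H^{1/2}(\partial B),H^{-1/2}(\partial B))}$. Composing the three estimates gives exactly the asserted bound. No substantive obstacle is expected: the argument rests only on the termwise trace and on $\inf_n\abs{g_{n,k}(1)}>0$, both supplied by Assumption~\ref{assump}.
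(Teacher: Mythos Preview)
Your proposal is correct and follows essentially the same route as the paper: the paper derives Corollary~\ref{cor2} by observing that for a single active $k$ condition~\eqref{eq:mixedterms} reduces to $|g_{n,k}(1)|\geq\epsilon$, which holds with $\epsilon=\inf_n|g_{n,k}(1)|$, and then invokes Theorem~\ref{thm:main}; you instead unroll the proof of Theorem~\ref{thm:main} (termwise boundary trace via Lemma~\ref{lemma:trace}, Parseval on $\partial B$ and $B$, then Theorem~\ref{thm:bdrstab}) directly in the single-$k$ setting, which is the same argument with the intermediate theorem skipped.
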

	
	In particular, if $(\gamma_1-\gamma_2)\in \cup_k \mathcal{A}_k$, one can use the common stability constant
	\begin{equation*}
		\frac{\abs{\partial B}^{1/2}}{\inf_{n,k}\abs{g_{n,k}(1)}}C_{\partial B}.
	\end{equation*}
	Once again, we could have
	\begin{equation*}
		\gamma_1 = \gamma_0 + \widetilde{\gamma}_1 \quad \text{and} \quad \gamma_2 = \gamma_0 + \widetilde{\gamma}_2,
	\end{equation*}
	where, e.g., $\widetilde{\gamma}_1,\widetilde{\gamma}_2\in \mathcal{A}_k\cap C(\overline{B})$ for some $k$, and $\gamma_0$ is a large enough $C_+(\overline{B})$ function to ensure that the conductivities are bounded away from zero.
	
	Concluding that Theorem~\ref{thm:main} and Corollary~\ref{cor2} are actually applicable requires concrete examples where Assumption~\ref{assump} is satisfied. This is tackled in Section~\ref{sec:zernikemain} where we first derive an orthonormal Zernike basis for $L^2(B)$ in any spatial dimension $d\geq 2$, and subsequently verify that such a basis satisfies Assumption~\ref{assump}. Sections~\ref{sec:zernike2d}~and~\ref{sec:zernike3d} are dedicated to the Zernike basis functions in two and three spatial dimensions, which are the most relevant settings for the Calder\'on problem. We moreover give several fundamental properties of the Zernike basis in Theorem~\ref{thm:zernikebnds}.
	
	\subsection{Earlier stability results} \label{sec:background}
	
	For very general infinite-dimensional classes of conductivities the Calder\'on problem exhibits conditional logarithmic stability \cite{Alessandrini1988}, and this is optimal for such general settings \cite{Mandache2001,Koch2021}. For measurements on a subset of the domain boundary, only double-logarithmic stability has been proved \cite{HeckWang2006}, unless the conductivities are known near the domain boundary, in which case it is possible to again obtain logarithmic stability \cite{Alessandrini2012}. However, we emphasize that for smaller classes of conductivities the stability can be improved and there is no restriction why such classes cannot still be infinite-dimensional, as observed in this paper. To our knowledge, Lipschitz stability has not previously been documented for infinite-dimensional classes of conductivities for the nonlinear Calder\'on problem. Note that Lipschitz stability, instead of, e.g., logarithmic stability, is also important for convergence guarantees in reconstruction using iterative methods (see, e.g.,~\cite{Hoop2012}).
	
	In a linearized problem for the spatial dimension $d=2$, we previously proved that there are infinite-dimensional function spaces of perturbations (also expanded in a Zernike basis) for which there is Lipschitz stability \cite{GH2024}. However, this stability was with respect to a Hilbert--Schmidt norm for the linearized forward map (based on a Neumann-to-Dirichlet map), which prevented generalizations to $d>2$. It was at that time unclear if such results could be carried over to the nonlinear problem, in particular for the operator norm and any spatial dimension $d\geq 2$. On the other hand, the stability results for the linearized problem do not require the additional decay condition for the basis coefficients.
	
	For finite-dimensional classes of conductivities there have been many results on Lipschitz stability. A non-exhaustive list of such papers include \cite{Harrach_2019, Harrach2023,Alberti2019,Alberti2020,Alberti2022,Alberti2023,Alessandrini2005,Alessandrini2017,Beretta2011}. We emphasize that in many such cases it is also possible to reduce the number of measurements from a Dirichlet-to-Neumann or Neumann-to-Dirichlet map to just a finite set of boundary measurements.
	
	For the related easier problem of inclusion detection, where the support of a perturbation is reconstructed rather than the conductivity values, Lipschitz stability has been proved in quite general settings of polygonal, polyhedral, and small volume fraction inclusions \cite{Hanke2024,Beretta2022,Beretta2020,Beretta2021,Friedman1989b}. We emphasize, in particular, the recent paper \cite{Hanke2024} for Lipschitz stability in the determination of a polygonal inclusion from just two boundary measurements.

    \subsection{Article structure}
    To summarize, the rest of this text is organized as follows. Section~\ref{sec:zernikemain} constructs Zernike bases that satisfy Assumption~\ref{assump} for all $d \geq 2$. Section~\ref{sec:proofmain} provides a proof for Theorem~\ref{thm:main}. Section~\ref{sec:proofthmzernike} provides a proof for Theorem~\ref{thm:zernikebnds} that considers properties of the constructed radial Zernike polynomials. The proof of Proposition 2.1 on an $L^\infty(\partial B)$-bound for spherical harmonics is presented in Appendix~\ref{sec:appA}.
	
	\section{Zernike basis} \label{sec:zernikemain}
	
	\subsection{Spherical harmonics} \label{sec:sphericalham}
	
	As a preparation to the introduction of the Zernike basis, we recall some facts about spherical harmonics; see \cite[Chapter~5]{Axler_2001} and \cite{Efthimiou_2014} for additional insights.
	
	A homogeneous polynomial $p \colon \R^d\to\C$ of degree $\ell$ satisfies $p(\lambda x) = \lambda^\ell p(x)$ for any $\lambda\in\R$, or equivalently
	\begin{equation*}
		p(x) = \sum_{|\alpha| = \ell} c_\alpha x^\alpha,
	\end{equation*}
	for coefficients $c_\alpha\in \C$ with $\alpha\in \N_0^d$. We use multi-index notation, i.e.\ denote $x^\alpha = \prod_{i=1}^d x_i^{\alpha_i}$ and $\abs{\alpha} = \sum_{i=1}^d \alpha_i$ for $\alpha\in \N_0^d$. Let
	\begin{equation*} 
		\mathcal{P}_\ell = \Bigl\{\, p \colon \R^d \to \C \mid p(x) = \sum_{|\alpha| = \ell} c_\alpha x^\alpha, \; c_\alpha \in \C, \; \Delta p = 0 \,\Bigr\}
	\end{equation*}
	be the space of \emph{harmonic} homogeneous polynomials of degree $\ell$. 
	
	The space of spherical harmonics of degree $\ell$ is defined as the restriction of the harmonic homogeneous polynomials of degree $\ell$ to the unit sphere:
	\begin{equation*} 
		\mathcal{H}_\ell = \{\, p|_{\partial B} \mid p \in \mathcal{P}_\ell \,\}.
	\end{equation*}
	We have 
	\begin{equation} \label{dimHl}
		\dim\mathcal{H}_\ell = \dim\mathcal{P}_\ell = \binom{\ell+d-1}{d-1} - \binom{\ell+d-3}{d-1},
	\end{equation}
	with the convention that $\binom{m}{k} = 0$ for nonnegative integers $m<k$.
	
	The $\mathcal{H}_\ell$-spaces are the eigenspaces for the Laplace–Beltrami operator $\Delta_{\partial B}$, with
	\begin{equation} \label{eq:LBeig}
		\Delta_{\partial B}f = -\ell(\ell+d-2)f, \quad f\in\mathcal{H}_\ell.
	\end{equation}
	Hence, there is the following orthogonal decomposition:
	\begin{equation*}
		L^2(\partial B) = \bigoplus_{\ell=0}^\infty \mathcal{H}_\ell.
	\end{equation*}
	Let $\{f_{\ell,m}\}_{m\in\mathcal{I}_{\ell}}$ be an orthonormal basis for $\mathcal{H}_\ell$; in particular, the index set $\mathcal{I}_{\ell}$ has the cardinality $\dim\mathcal{H}_\ell$. Thereby, $\{f_{\ell,m}\}_{\ell\in\N_0, m\in\mathcal{I}_{\ell}}$ is an orthonormal basis for $L^2(\partial B)$.
	
	We will need the following bound in $L^\infty$.
	
	\begin{proposition} \label{prop:sphericalhambnds}
		For $f\in\mathcal{H}_\ell$, there is the sharp bound
		\begin{equation*}
			\norm{f}_{L^\infty(\partial B)} \leq \Bigl(\frac{\dim\mathcal{H}_\ell}{\abs{\partial B}}\Bigr)^{1/2}\norm{f}_{L^2(\partial B)}.
		\end{equation*}
	\end{proposition}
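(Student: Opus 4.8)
The plan is to work with the reproducing kernel (zonal harmonic) of $\mathcal{H}_\ell$. Using the orthonormal basis $\{f_{\ell,m}\}_{m\in\mathcal{I}_\ell}$ of $\mathcal{H}_\ell$ introduced above, I would set
\[
Z_\ell(\theta,\omega) = \sum_{m\in\mathcal{I}_\ell} f_{\ell,m}(\theta)\,\overline{f_{\ell,m}(\omega)}, \qquad \theta,\omega\in\partial B,
\]
and record two elementary facts. First, $Z_\ell$ is independent of the chosen orthonormal basis of $\mathcal{H}_\ell$: any two such bases differ by a unitary matrix, which cancels in the sum $\sum_m(\cdot)(\theta)\overline{(\cdot)(\omega)}$ because $U^*U = I$. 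Second, $Z_\ell$ reproduces $\mathcal{H}_\ell$: expanding $f\in\mathcal{H}_\ell$ as $f=\sum_m \inner{f,f_{\ell,m}}_{L^2(\partial B)} f_{\ell,m}$ gives directly
\[
f(\theta) = \int_{\partial B} Z_\ell(\theta,\omega)\, f(\omega)\,\di S(\omega), \qquad \theta\in\partial B.
\]

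Next I would show that the diagonal value $Z_\ell(\theta,\theta)$ is constant in $\theta$. For any $R\in O(d)$ the map $f\mapsto f\circ R$ is a linear isometry of $\mathcal{H}_\ell$ onto itself (the Laplacian commutes with orthogonal transformations, so $\mathcal{P}_\ell$ and hence $\mathcal{H}_\ell$ is $O(d)$-invariant, and the surface measure is rotation invariant). Thus $\{f_{\ell,m}\circ R\}_m$ is again an orthonormal basis of $\mathcal{H}_\ell$, and basis independence of $Z_\ell$ yields $Z_\ell(R\theta,R\omega) = Z_\ell(\theta,\omega)$. Putting $\omega=\theta$ and using transitivity of the $O(d)$-action on $\partial B$ shows $Z_\ell(\theta,\theta)=\lambda_\ell$ for some constant $\lambda_\ell\geq 0$. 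Its value comes from integrating the identity $Z_\ell(\theta,\theta)=\sum_m\abs{f_{\ell,m}(\theta)}^2$ over $\partial B$ and using $\norm{f_{\ell,m}}_{L^2(\partial B)}=1$:
\[
\lambda_\ell\,\abs{\partial B} = \sum_{m\in\mathcal{I}_\ell} \norm{f_{\ell,m}}_{L^2(\partial B)}^2 = \dim\mathcal{H}_\ell,
\]
so $\lambda_\ell = \dim\mathcal{H}_\ell/\abs{\partial B}$.

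The bound is then immediate. For $f\in\mathcal{H}_\ell$ and $\theta\in\partial B$, the reproducing identity and the Cauchy--Schwarz inequality give $\abs{f(\theta)}^2 \leq \bigl(\int_{\partial B}\abs{Z_\ell(\theta,\omega)}^2\,\di S(\omega)\bigr)\norm{f}_{L^2(\partial B)}^2$, and by orthonormality of $\{f_{\ell,m}\}$,
\[
\int_{\partial B}\abs{Z_\ell(\theta,\omega)}^2\,\di S(\omega) = \sum_{m\in\mathcal{I}_\ell}\abs{f_{\ell,m}(\theta)}^2 = Z_\ell(\theta,\theta) = \frac{\dim\mathcal{H}_\ell}{\abs{\partial B}}.
\]
Taking the supremum over $\theta\in\partial B$ proves the inequality. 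For sharpness I would test with $f = Z_\ell(\cdot,\theta_0)\in\mathcal{H}_\ell$ for a fixed $\theta_0\in\partial B$: then $\abs{f(\theta_0)} = Z_\ell(\theta_0,\theta_0) = \dim\mathcal{H}_\ell/\abs{\partial B}$ while $\norm{f}_{L^2(\partial B)}^2 = \int_{\partial B}\abs{Z_\ell(\theta_0,\omega)}^2\,\di S(\omega) = \dim\mathcal{H}_\ell/\abs{\partial B}$, so equality holds. The only genuinely non-bookkeeping step is the constancy of $Z_\ell(\theta,\theta)$, which is precisely where the rotational symmetry of the space of spherical harmonics enters; without it one would still get an estimate, but with a possibly larger (non-sharp) constant $\sup_\theta Z_\ell(\theta,\theta)\,\abs{\partial B}/\dim\mathcal{H}_\ell$.
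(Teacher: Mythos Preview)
Your proof is correct and is essentially the same argument as the paper's: your reproducing kernel $Z_\ell(\cdot,\theta_0)$ is exactly the paper's Riesz representer $v_{\theta_0}$ (the paper introduces $v_x$ abstractly via Riesz and only later expands it in the basis, whereas you define the kernel via the basis and then note basis independence), and both proofs hinge on the same rotation-invariance step to show the diagonal is constant, followed by integration to identify the constant and Cauchy--Schwarz for the bound, with sharpness witnessed by the kernel itself.
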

	\begin{proof}
		The proof is given in Appendix~\ref{sec:appA}.
	\end{proof}
	
	\subsection{Constructing the Zernike basis} \label{sec:zernike}
	
	Consider the $d$-dimensional Zernike operator 
	\begin{equation*}
		Z = -\Delta + (x \cdot \nabla)^2 + d(x \cdot \nabla)
	\end{equation*}
	in the unit ball domain $B\subset \R^d$, generalized from the two-dimensional case as originally considered in \cite[Section~6]{Zernike1934}. Here $x \cdot\nabla$ denotes the differential operator
	\begin{equation*}
		x\cdot\nabla = \sum_{i=1}^d x_i\frac{\partial}{\partial x_i}.
	\end{equation*}
	The $d$-dimensional Zernike basis for $L^2(B)$ is comprised of normalized eigenfunctions for $Z$, i.e.\ we consider the eigenvalue problem
	\begin{equation*}
		Z\psi = \mu\psi.
	\end{equation*}
	Despite the fact that Zernike bases can be introduced for any $d\in\N\setminus\{1\}$, they are mostly unexplored for $d>2$. For $d=3$ see, e.g., \cite{Garde2025,Mathar2009}. We construct the Zernike basis for a general spatial dimension $d\in\N\setminus\{1\}$.
	
	In polar coordinates the eigenvalue problem takes the form
	\begin{equation} 
		\label{eq:polar_Z}
		\Bigl( \frac{\partial^2}{\partial r^2} + \frac{d - 1}{r} \frac{\partial}{\partial r} + \frac{1}{r^2} \Delta_{\partial B} \Bigr) \psi - \Bigl( r \frac{\partial}{\partial r} \Bigr)^2 \psi - dr \frac{\partial}{\partial r} \psi = -\mu\psi.
	\end{equation}
	Using separation of variables with $\psi(r,\theta) = R(r)\Theta(\theta)$, we may expand $\Theta$ in the orthonormal spherical harmonics basis $\{f_{\ell,m}\}_{\ell\in\N_0,m\in\mathcal{I}_\ell}$ for $L^2(\partial B)$. Since $f_{\ell,m}$ is an eigenfunction for $\Delta_{\partial B}$ with the eigenvalue 
	\begin{equation*}
		\lambda_\ell = -\ell(\ell + d - 2),
	\end{equation*}
	this approach reduces \eqref{eq:polar_Z} to an $\ell$-dependent radial problem on the interval $(0,1$). After multiplying with $r^{d-1}$, one ends up with the equation
	\begin{equation} 
		r^{d-1}(1-r^2) \frac{\di^2}{\di r^2} R_\ell + \bigl[(d - 1) r^{d-2} - (d + 1) r^d\bigr] \frac{\di}{\di r}R_\ell + \lambda_\ell r^{d-3} R_\ell = -\mu r^{d-1} R_\ell. \label{eq:poly_radial}
	\end{equation}
	Rearranging the terms leads to a singular (at $r=0$ and $r=1$) Sturm--Liouville eigenvalue problem
	\begin{equation} \label{eq:SL-form}
		\frac{\di}{\di r} \Bigl[ r^{d-1} (1 - r^2) \frac{\di}{\di r} R_\ell \Bigr] + \lambda_\ell r^{d-3} R_\ell = -\mu r^{d-1} R_\ell,
	\end{equation}
	which is self-adjoint, with eigenvalue $\mu$ and weight-function $r^{d-1}$. By virtue of Sturm--Liouville theory, this entails that for each $\ell\in\N_0$, there are discrete real eigenvalues $\{ \mu_{\ell,k} \}_{k\in\N_0}$, and one can construct an orthonormal basis for $L^2_{r^{d-1}}([0,1])$ from the corresponding eigenfunctions for \eqref{eq:SL-form}. 
	
	We will first determine certain bounded orthonormal eigenfunctions $\{R_{\ell,k}\}_{k\in\N_0}$ for \eqref{eq:SL-form}, without worrying about whether we find all the orthogonal eigenfunctions. Subsequently, we will prove that the constructed $\{R_{\ell,k}\}_{k\in\N_0}$ indeed forms an orthonormal basis for $L^2_{r^{d-1}}([0,1])$. As $\{f_{\ell,m}\}_{\ell\in\N_0,m\in\mathcal{I}_\ell}$ is an orthonormal basis for $L^2(\partial B)$, then 
	\begin{equation*}
		\psi_{\ell,m,k}(r\theta) = R_{\ell,k}(r)f_{\ell,m}(\theta), \quad \ell,k\in\N_0,\enskip m\in\mathcal{I}_\ell
	\end{equation*}
	becomes an orthonormal basis for $L^2(B)$.
	
	We may solve \eqref{eq:poly_radial}, and thus \eqref{eq:SL-form}, with the following strategy. Consider the hypergeometric differential equation, with parameters $a$, $b$, and $c$, on the interval $(0,1)$,
	\begin{equation} \label{eq:hypergeom}
		z(1 - z)\frac{\di^2}{\di z^2}w + [c - (a + b + 1)z]\frac{\di}{\di z}w - abw = 0,
	\end{equation}
	which admits a solution given by the Gauss hypergeometric function 
	\begin{equation*}
		w(z) = \F(a,b;c;z),
	\end{equation*}
	assuming $c$ is not a nonpositive integer. It is known that linear second order ordinary differential equations with three regular singularities, e.g., at $0$, $1$, and $\infty$, can be transformed to a hypergeometric differential equation. By making the change of variable $z = r^2$, and $h(r) = w(r^2)$, we transform \eqref{eq:hypergeom} into 
	\begin{equation} \label{eq:hypergeom_r2}
		(1 - r^2)\frac{\di^2}{\di r^2}h + \Bigl[ \frac{2c - 1}{r} - 2 \Bigl(a + b + \frac{1}{2} \Bigr) r \Bigr] \frac{\di}{\di r}h - 4abh = 0.
	\end{equation}
	By inserting
	\begin{equation} \label{eq:w_translation}
		R(r) = r^{\ell}h(r) = r^{\ell}w(r^2)
	\end{equation}
	into \eqref{eq:hypergeom_r2} and multiplying the equation with $r^{\ell+d-1}$, we arrive at
	\begin{align} 
		r^{d-1}(1-r^2)\frac{\di^2}{\di r^2}R + [ (2c - 2\ell - 1)r^{d-2} &- (2a + 2b - 2\ell + 1)r^d]\frac{\di}{\di r}R \notag\\
		&- \ell(2c - \ell - 2)r^{d-3}R = -(\ell - 2a)(2b - \ell)r^{d-1}R. \label{eq:hypergeom_r2_tra}
	\end{align}
	For the coefficients of \eqref{eq:poly_radial} and \eqref{eq:hypergeom_r2_tra} to match, we must have
	\begin{align*}
		2c-2\ell &= d, \\
		2a+2b-2\ell &= d, \\
		\ell(2c-\ell) &= \ell(\ell + d), \\
		(\ell-2a)(2b-\ell) &= \mu,
	\end{align*}
	where the first condition makes the third one redundant. In consequence,
	\begin{equation*}
		c = a+b = \ell + \tfrac{d}{2}
	\end{equation*}
	and
	\begin{equation*}
		\mu = -4ab+2(a+b)\ell-\ell^2 = -4ab+\ell^2+d\ell.
	\end{equation*}
	Since $c=a+b$, it follows from \cite[§15.4(ii) Eq.~15.4.21 and Eq.~15.4.24]{NIST} that we must insist that
	\begin{equation*}
		a = -k, \quad k\in\N_0
	\end{equation*}
	as we aim for bounded eigenfunctions and thus want to avoid a logarithmic singularity in the hypergeometric function at $z=1$. Thus, we have
	\begin{equation*}
		b = c-a = \ell+k+\tfrac{d}{2},
	\end{equation*} 
	and the eigenvalues take the form
	\begin{equation*}
		\mu_{\ell,k} = -4(-k)(\ell+k+\tfrac{d}{2})+\ell^2+d\ell = (\ell + 2k)(\ell + 2k + d).
	\end{equation*}
	Altogether, with a normalization constant $C_{\ell,k}$, we have constructed the orthonormal eigenfunctions
	\begin{equation} \label{eq:Zernike_sol}
		R_{\ell,k}(r) = C_{\ell,k}r^\ell \F(-k, \ell + k + \tfrac{d}{2}; \ell + \tfrac{d}{2}; r^2), \quad k \in \N_0
	\end{equation}
	in $L^2_{r^{d-1}}([0,1])$ for the Sturm--Liouville problem \eqref{eq:SL-form}, as the eigenfunctions correspond to different eigenvalues. It follows from \cite[§15.2(ii) Eq.~15.2.4]{NIST} that $R_{\ell,k}$ is a polynomial of degree $\ell+2k$, which we call a \emph{radial Zernike polynomial},
	\begin{equation} \label{eq:Zernike1}
		R_{\ell,k}(r) = C_{\ell,k}\sum_{q=0}^k (-1)^q\binom{k}{q}\frac{(\ell+k+\tfrac{d}{2})_q}{(\ell+\tfrac{d}{2})_q}r^{\ell+2q},
	\end{equation}
	written in terms of Pochhammer symbols (rising factorials).
	
	We have collected a number of fundamental properties of the radial Zernike polynomials below. 
	
	\begin{theorem} \label{thm:zernikebnds} {}\
		\begin{enumerate}[\rm(i)]
			\item For each $\ell\in\N_0$, $\{R_{\ell,k}\}_{k\in\N_0}$ is an orthonormal basis for $L^2_{r^{d-1}}([0,1])$.
			\item The normalizing constant is
			\begin{equation*}
				C_{\ell,k} = (-1)^k\sqrt{2\ell+4k+d}\binom{\ell+k+\tfrac{d-2}{2}}{k},
			\end{equation*}
			where $(-1)^k$ ensures that $R_{\ell,k}(1)$ is positive, with the function value
			\begin{equation*}
				R_{\ell,k}(1) = \sqrt{2\ell+4k+d}.
			\end{equation*}
			This choice of sign is assumed for the properties {\rm (iii)} and {\rm (v)} below.
			\item We have
			\begin{equation*}
				\frac{\di}{\di r}R_{\ell,k}(1) > 0,
			\end{equation*}
			except for $\ell=k=0$, in which case $R_{0,0}$ is a constant function.
			\item There is the uniform bound
			\begin{equation*}
				\max_{r\in[0,1]}\abs{R_{\ell,k}(r)} \leq \abs{C_{\ell,k}}. 
			\end{equation*}
			\item For any $\ell, p \in \N_0$,
			\begin{equation*}
				r^{\ell + 2p} = \sum^p_{k = 0} \chi_{\ell, k, p} \, R_{\ell, k}(r),
			\end{equation*} 
			where
			\begin{equation*}
				\chi_{\ell, k, p} = \frac{\sqrt{2\ell + 4k + d}}{2p + 2\ell + 2k + d} \binom{p}{k} \binom{p + \ell + k + \frac{d-2}{2}}{k}^{-1}.
			\end{equation*}
		\end{enumerate}
	\end{theorem}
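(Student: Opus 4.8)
The plan is to settle each of the five items by reducing it to a classical fact about Jacobi polynomials or about terminating hypergeometric series, via the single structural remark that, by \eqref{eq:Zernike_sol},
\[
R_{\ell,k}(r)=C_{\ell,k}\binom{\ell+k+\tfrac{d-2}{2}}{k}^{-1}r^{\ell}\,P_k^{(\alpha,0)}(1-2r^2),\qquad \alpha:=\ell+\tfrac{d-2}{2}\ge 0,
\]
so that the substitution $s=r^2$ identifies $L^2_{r^{d-1}}([0,1])$ (up to the harmless factor $\tfrac12$) with $L^2$ of $[0,1]$ against the weight $s^{\alpha}$, i.e.\ via $x=1-2s$ with the Jacobi weight $(1-x)^{\alpha}$ on $[-1,1]$. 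The scalar inputs I would invoke are the Chu--Vandermonde evaluation $\F(-k,b;c;1)=(c-b)_k/(c)_k$, Saalsch\"utz's theorem for a balanced terminating ${}_3F_2$ at $1$, the Jacobi $L^2$-norm $\int_{-1}^{1}[P_k^{(\alpha,0)}(x)]^2(1-x)^{\alpha}\,\di x=2^{\alpha+1}/(2k+\alpha+1)$ (especially clean because $\beta=0$), and the sup-norm bound $\max_{|x|\le1}|P_k^{(\alpha,\beta)}(x)|=P_k^{(\alpha,\beta)}(1)$, valid for $\alpha\ge\beta$, $\alpha\ge-\tfrac12$ (see \cite{NIST}). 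I would also repeatedly use $\binom{\ell+k+\tfrac{d-2}{2}}{k}=(\ell+\tfrac d2)_k/k!$ for the Pochhammer bookkeeping.

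I would start with (ii). The normalization $\int_0^1 R_{\ell,k}(r)^2 r^{d-1}\,\di r=1$ becomes, through $s=r^2$ and $x=1-2s$, precisely the Jacobi $L^2$-norm integral above, which forces $|C_{\ell,k}|^2=(2\ell+4k+d)\binom{\ell+k+\tfrac{d-2}{2}}{k}^{2}$. For the boundary value, Chu--Vandermonde gives $\F(-k,\ell+k+\tfrac d2;\ell+\tfrac d2;1)=(-k)_k/(\ell+\tfrac d2)_k=(-1)^kk!/(\ell+\tfrac d2)_k$, so the sign choice $C_{\ell,k}=(-1)^k\sqrt{2\ell+4k+d}\binom{\ell+k+\tfrac{d-2}{2}}{k}$ yields exactly $R_{\ell,k}(1)=\sqrt{2\ell+4k+d}>0$ (with the consistency checks $R_{\ell,0}(r)=\sqrt{2\ell+d}\,r^{\ell}$ and $R_{0,0}\equiv\sqrt{d}$). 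Part (i) then splits into orthogonality, normalization, and completeness: the first is already supplied by the Sturm--Liouville structure (the eigenvalues $\mu_{\ell,k}=(\ell+2k)(\ell+2k+d)$ are distinct in $k$ for fixed $\ell$, and \eqref{eq:SL-form} is self-adjoint), the second is (ii), and for the third I would note that, by \eqref{eq:Zernike1}, $R_{\ell,k}$ is $r^{\ell}$ times an even polynomial of degree $2k$ with nonzero leading term, so $\{R_{\ell,k}\}_{k\le K}$ and $\{r^{\ell+2k}\}_{k\le K}$ span the same space for each $K$; it then suffices to show $\overline{\spanm\{r^{\ell+2k}\}_{k\in\N_0}}=L^2_{r^{d-1}}([0,1])$, which follows because any $f$ orthogonal to all $r^{\ell+2k}$ gives, after $s=r^2$, a function $g(s)=\overline{f(\sqrt s)}\,s^{(\ell+d-2)/2}\in L^1([0,1])$ (Cauchy--Schwarz, using $\ell+\tfrac d2>0$, which is where $d\ge2$ enters) all of whose moments vanish, forcing $g\equiv0$ by Weierstrass approximation.

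For (iii), write $R_{\ell,k}(r)=C_{\ell,k}r^{\ell}w(r^2)$ with $w(s)=\F(-k,\ell+k+\tfrac d2;\ell+\tfrac d2;s)$, so that $\tfrac{\di}{\di r}R_{\ell,k}(1)=C_{\ell,k}\bigl(\ell\,w(1)+2w'(1)\bigr)$; both $w(1)$ and $w'(1)=\tfrac{-k(\ell+k+d/2)}{\ell+d/2}\F(1-k,\ell+k+\tfrac d2+1;\ell+\tfrac d2+1;1)$ are evaluated by Chu--Vandermonde, and after the cancellations one obtains the closed form $\tfrac{\di}{\di r}R_{\ell,k}(1)=\sqrt{2\ell+4k+d}\,\bigl(\ell(2k+1)+k(2k+d)\bigr)$, which is $>0$ unless $\ell=k=0$. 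For (iv), the identification from the first paragraph turns $|R_{\ell,k}(r)|\le|C_{\ell,k}|$ into $r^{\ell}|P_k^{(\alpha,0)}(1-2r^2)|\le\binom{\alpha+k}{k}=P_k^{(\alpha,0)}(1)$ on $[0,1]$, and since $r^{\ell}\le1$ there, this is immediate from the Jacobi sup-norm bound, applicable because $\alpha=\ell+\tfrac{d-2}{2}\ge0$ and $\beta=0$.

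Finally, for (v): by (i), $r^{\ell+2p}=\sum_{k=0}^{p}\chi_{\ell,k,p}R_{\ell,k}(r)$ with $\chi_{\ell,k,p}=\inner{r^{\ell+2p},R_{\ell,k}}_{L^2_{r^{d-1}}}=\tfrac{C_{\ell,k}}{2}\int_0^1 s^{\ell+p+(d-2)/2}\,\F(-k,\ell+k+\tfrac d2;\ell+\tfrac d2;s)\,\di s$ after $s=r^2$; expanding the terminating series and integrating term by term rewrites this as $\tfrac{C_{\ell,k}}{2(\ell+p+d/2)}$ times the terminating ${}_3F_2$ at $1$ with upper parameters $-k$, $\ell+k+\tfrac d2$, $\ell+p+\tfrac d2$ and lower parameters $\ell+\tfrac d2$, $\ell+p+\tfrac d2+1$, which one checks is balanced; Saalsch\"utz's theorem evaluates it, and substituting $C_{\ell,k}$ and simplifying the resulting products of Pochhammer symbols and binomial coefficients yields exactly $\chi_{\ell,k,p}=\frac{\sqrt{2\ell+4k+d}}{2p+2\ell+2k+d}\binom{p}{k}\binom{p+\ell+k+\tfrac{d-2}{2}}{k}^{-1}$. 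I expect this last computation to be the main obstacle — recognizing the term-by-term integral as a balanced terminating ${}_3F_2(1)$, confirming the Saalsch\"utzian condition, and pushing the Pochhammer/binomial algebra through without slips; by comparison (ii) and (iii) are short consequences of Chu--Vandermonde, (iv) is a direct appeal to a standard Jacobi estimate, and the only other delicate point is making the $L^1$-integrability step in the completeness argument for (i) fully rigorous.
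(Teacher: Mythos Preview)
Your proposal is correct. Parts (i), (iii), and (iv) essentially coincide with the paper's arguments: same Sturm--Liouville orthogonality plus span-matching with $\{r^{\ell+2k}\}$ for completeness (the paper cites a lemma, you give the moment argument explicitly), same Chu--Vandermonde computation for the derivative at $r=1$ (the paper does not record the closed form $\sqrt{2\ell+4k+d}\,[\ell(2k+1)+k(2k+d)]$, but it is correct), and the same Jacobi sup-norm bound for (iv).

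The genuine differences are in (ii) and (v). For (ii) you invoke the known Jacobi $L^2$-norm (which is especially clean since $\beta=0$) to read off $|C_{\ell,k}|$ directly; the paper instead works intrinsically, combining the contiguous relation $\widetilde F_j=\tfrac{\ell+2j+\tfrac{d-2}{2}-1}{\ell+j+\tfrac{d-2}{2}}F_{j-1}-\tfrac{j-1}{\ell+j+\tfrac{d-2}{2}}\widetilde F_{j-1}$ with an integration-by-parts identity to obtain $\|r^\ell F_k\|^2=F_k(1)^2/(2\ell+4k+d)$. For (v) you integrate the terminating series termwise and recognize the result as a balanced ${}_3F_2(1)$ (the balance check $(\ell+\tfrac d2)+(\ell+p+\tfrac d2+1)=(-k)+(\ell+k+\tfrac d2)+(\ell+p+\tfrac d2)+1$ is correct), then apply Pfaff--Saalsch\"utz; the paper instead proves $\langle r^{\ell+2p},R_{\ell,k}\rangle=\chi_{\ell,k,p}$ by induction on $k$, again using the contiguous relation and integration by parts. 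Your route is shorter and imports two classical evaluations; the paper's route is more self-contained, needing only Chu--Vandermonde and contiguous relations, and in the process yields the auxiliary identity $\langle r^\ell\widetilde F_j,r^\ell F_k\rangle=0$ for $j\le k$, which it also reuses in (ii).
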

	\begin{proof}
		The proof is given in Section~\ref{sec:proofthmzernike}.
	\end{proof}
	In particular, for $\ell,k\in\N_0$, the radial Zernike polynomials are explicitly given as
	\begin{align}
		R_{\ell,k}(r) &= \sqrt{2\ell+4k+d}\sum_{q=0}^k (-1)^{k+q}\binom{k}{q}\binom{\ell+k+\tfrac{d-2}{2}}{k}\frac{(\ell+k+\tfrac{d}{2})_q}{(\ell+\tfrac{d}{2})_q}r^{\ell+2q} \notag\\
		&= \sqrt{2\ell+4k+d}\sum_{s=0}^k (-1)^s\binom{k}{s}\binom{\ell+2k-s+\tfrac{d-2}{2}}{k}r^{\ell+2k-2s}, \label{eq:zernikepol}
	\end{align}
	where the summation index was changed to $s = k-q$, and the Pochhammer symbols and binomial coefficients were written out and rearranged in the latter representation.
	
	For $\ell,k\in\N_0$ and $m\in\mathcal{I}_\ell$, the orthonormal Zernike basis functions for $L^2(B)$ are
	\begin{align*}
		\psi_{\ell,m,k}(r\theta) &= R_{\ell,k}(r)f_{\ell,m}(\theta).
	\end{align*}
	
	\subsection{Connecting the Zernike basis to Assumption~\ref{assump}} \label{sec:ZerniketoStab}
	
	To show how the Zernike basis fits into the stability results in Section~\ref{sec:results}, we let
	\begin{equation*}
		n = (\ell,m)
	\end{equation*}
	for $\ell\in\N_0$ and $m\in\mathcal{I}_\ell$, and consider $k\in\N_0$. 
	In the notation of Assumption~\ref{assump}, choosing
	\begin{equation*}
		f_n = f_{\ell,m} \quad \text{and} \quad g_{n,k} = R_{\ell,k}
	\end{equation*}
	gives the Zernike basis functions as $\psi_{n,k}$. In particular,
	\begin{equation*}
		W = L^2(B).
	\end{equation*}
	According to Proposition~\ref{prop:sphericalhambnds} and Theorem~\ref{thm:zernikebnds},
	\begin{equation} \label{eq:psiZernikebnd}
		\norm{\psi_{n,k}}_{L^\infty(B)} \leq \abs{C_{\ell,k}}\Bigl(\frac{\dim\mathcal{H}_\ell}{\abs{\partial B}}\Bigr)^{1/2},
	\end{equation}
	which we may directly use as our $a_{n,k}$. However, this is actually nonoptimal since Theorem~\ref{thm:zernikebnds} also shows that in some interior neighborhood of $\partial B$ the maximum of $\abs{R_{\ell,k}}$ is attained at $r=1$ (although such a neighborhood may depend on both $\ell$ and $k$). On a fixed spherical shell $U$ as in Assumption~\ref{assump}, one can therefore replace $|C_{\ell,k}|$ in $a_{n,k}$ with $R_{\ell,k}(1)=\sqrt{2\ell+4k+d}$ for many of the indices.
	
	From Theorem~\ref{thm:zernikebnds} we have
	\begin{equation*}
		\inf_{n,k}\abs{g_{n,k}(1)} = \sqrt{d},
	\end{equation*}
	which completes checking that the Zernike basis satisfies all conditions in Assumption~\ref{assump}. In particular, the stability constant in Corollary~\ref{cor2} takes the form
	\begin{equation}
		\Bigl(\frac{\abs{\partial B}}{4k+d}\Bigr)^{1/2}C_{\partial B}.	
	\end{equation}
	Even though stability actually improves as $k$ increases, the decay condition for the basis coefficients in $\mathcal{A}_k$ also becomes stronger.
	
	\subsection{The two-dimensional Zernike basis} \label{sec:zernike2d}
	
	The case $d=2$ for the Zernike basis is quite well-known in the literature, e.g., for the Calder\'on problem \cite{GH2024,Autio2025,Bisch,Allers91}, Computed Tomography \cite{Cormack1964,Louis1981}, and especially for problems in optics \cite{Zernike1934,Bhatia1954,BornWolf}. 
	
	According to \eqref{dimHl} we have $\dim\mathcal{H}_0 = 1$ and $\dim\mathcal{H}_\ell = 2$ for $\ell\in\N$, which leads naturally to the choices
	\begin{equation*}
		\ell = \abs{j}, \quad \mathcal{I}_\ell = \{-j,j\}, \quad \mathcal{I}_0=\{0\}.
	\end{equation*}
	Thereby, $\ell$ and $m$ are combined into a single index $j\in\Z$. Moreover, we identify $\theta\in\partial B$ with $\theta\in[0,2\pi)$, as is typical for polar coordinates in $\R^2$. Hence, we may choose the standard orthonormal basis for the spherical harmonics,
	\begin{equation*}
		f_{\ell,m}(\theta) = \frac{\e^{\I j\theta}}{\sqrt{2\pi}}.
	\end{equation*}
	The two-dimensional Zernike basis functions, with $j\in\Z$ and $k\in\N_0$, thus become:
	\begin{align} \label{eq:Zernikebasis2D}
		\psi_{j,k}^{\textup{2D}}(r,\theta) &= \sqrt{2\abs{j}+4k+2}\sum_{s=0}^k (-1)^s\binom{k}{s}\binom{\abs{j}+2k-s}{k}r^{\abs{j}+2k-2s}\frac{\e^{\I j\theta}}{\sqrt{2\pi}} \notag \\
		&= \sqrt{2\abs{j}+4k+2}\sum_{s=0}^k (-1)^s\binom{\abs{j}+2k-s}{s}\binom{\abs{j}+2k-2s}{k-s}r^{\abs{j}+2k-2s}\frac{\e^{\I j\theta}}{\sqrt{2\pi}}, 
	\end{align}
	where the latter expression is perhaps the more common one for $d=2$ (cf.~\cite{GH2024}).
	
	\subsection{The three-dimensional Zernike basis} \label{sec:zernike3d}
	
	In the case $d=3$, it is customary to identify a point on $\partial B$ with its spherical coordinates, i.e.\ the polar angle $\theta\in[0,\pi]$ and the azimuthal angle $\varphi\in[0,2\pi)$.
	
	From \eqref{dimHl} we have $\dim\mathcal{H}_\ell = 2\ell+1$, and we may thus choose $\mathcal{I}_\ell = \{-\ell,\dots,\ell\}$. This fits well with the standard orthonormal basis $\{ Y^m_\ell \}_{m \in \mathcal{I}_\ell}$ for $\mathcal{H}_\ell$, consisting of Laplace's spherical harmonics of degree $\ell$ and order $m$. 
	
	The three-dimensional Zernike basis functions, for $\ell,k\in\N_0$ and $m\in\{-\ell,\dots,\ell\}$, therefore become (cf.~\cite{Garde2025,Mathar2009}):
	\begin{equation} \label{eq:Zernikebasis3D}
		\psi_{\ell,m,k}^{\textup{3D}}(r,\theta,\varphi) = \sqrt{2\ell+4k+3}\sum_{s=0}^k (-1)^s\binom{k}{s}\binom{\ell+2k-s+\tfrac{1}{2}}{k}r^{\ell+2k-2s}Y^m_\ell(\theta,\varphi).
	\end{equation}
	
	\section{Proof of Theorem~\ref{thm:main}} \label{sec:proofmain}
	
	The decay condition in $\mathcal{A}$ allows for a boundary trace to be taken inside a basis expansion.
	\begin{lemma} \label{lemma:trace}
		Let $f\in \mathcal{A}\cap C(\overline{B})$ and $c_{n,k}=\inner{f,\psi_{n,k}}_{L^2(B)}$. Then,
		\begin{equation*}
			f(\theta) = \sum_{n,k} c_{n,k}\psi_{n,k}(\theta) = \sum_{n,k} c_{n,k}g_{n,k}(1)f_n(\theta), \quad \text{a.e. } \theta\in\partial B.
		\end{equation*}
	\end{lemma}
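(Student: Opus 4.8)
\emph{Plan.}
The idea is to play off two different modes of convergence of the expansion $\sum_{n,k} c_{n,k}\psi_{n,k}$. On one hand, ordinary $L^2(B)$-convergence is available simply because $f\in W$ and $\{\psi_{n,k}\}_{n,k}$ is an orthonormal basis of $W$, so $f=\sum_{n,k}c_{n,k}\psi_{n,k}$ in $L^2(B)$. On the other hand, the weighted-$\ell^1$ decay is exactly what gives uniform convergence on the spherical shell $U$ of Assumption~\ref{assump}(v): since $\norm{c_{n,k}\psi_{n,k}}_{L^\infty(U)}\le\abs{c_{n,k}}a_{n,k}$ and $\sum_{n,k}\abs{c_{n,k}}a_{n,k}<\infty$ for $f\in\mathcal{A}$, the series converges absolutely in $L^\infty(U)$. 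The reason for combining the two is that $L^2(B)$-convergence alone is blind to the boundary, $\partial B$ being Lebesgue-null in $B$, whereas the uniform control persists up to and including $r=1$.

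Concretely I would proceed as follows. First, fix an enumeration of the index pairs so that both of the series above become sequences of partial sums; the first converges to $f$ in $L^2(B)$, and the second converges absolutely in $L^\infty(U)$ to some $\tilde f$, which — after discarding the countable union of the null sets in Assumption~\ref{assump}(v) — is a genuine function given pointwise on $U$ by an absolutely convergent series. Restricting the $L^2(B)$-convergence to the finite-measure set $U$ and comparing with the (hence also $L^2(U)$-convergent) second series yields $\tilde f=f$ a.e.\ on $U$. Second, I would pass to $r=1$. Writing $U\cong[1-\eta,1]\times\partial B$ and applying Fubini in the radial variable, for a.e.\ $\theta\in\partial B$ one has $f(r\theta)=\tilde f(r\theta)=\sum_{n,k}c_{n,k}g_{n,k}(r)f_n(\theta)$ for a.e.\ $r\in[1-\eta,1]$, with convergence uniform in $r$. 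Because each $g_{n,k}$ is continuous at $r=1$ (Assumption~\ref{assump}(iii)) and $f\in C(\overline{B})$, one may take $r\to 1$ first inside each finite partial sum and then let the partial sums run to infinity; this gives $f(\theta)=\sum_{n,k}c_{n,k}g_{n,k}(1)f_n(\theta)$ for a.e.\ $\theta$, where the series on the right converges in $L^2(\partial B)$ (in fact in $L^\infty(\partial B)$) because $\norm{g_{n,k}(1)f_n}_{L^\infty(\partial B)}\le a_{n,k}$. This last bound follows from Assumptions~\ref{assump}(v) and~\ref{assump}(iii) by the same radial-slicing argument, using the Cauchy--Schwarz estimate $\norm{f_n}_{L^\infty(\partial B)}\ge\abs{\partial B}^{-1/2}\norm{f_n}_{L^2(\partial B)}=\abs{\partial B}^{-1/2}$ to transfer the $L^\infty$ bound on $\psi_{n,k}$ to one on $g_{n,k}(1)$. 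Since $\psi_{n,k}(\theta)=g_{n,k}(1)f_n(\theta)$ by definition, this is precisely the claimed identity.

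The step I expect to be the main obstacle is the passage to $r=1$: away from the sphere $r=1$ the partial sums of the expansion need not be continuous on $U$ (only the radial factors $g_{n,k}$ at $r=1$ are assumed continuous, and the $f_n$ are merely $L^2(\partial B)$-functions), so one cannot simply invoke ``a uniform limit of continuous functions is continuous, then restrict it to the boundary''. The boundary values instead have to be extracted by combining the three pieces of regularity at hand — uniform convergence near $\partial B$, continuity of $f$ up to $\overline{B}$, and continuity of each $g_{n,k}$ at $r=1$ — via a Fubini argument reducing to almost-every radial ray and then taking $r\to1$ inside finite sums. Equivalently, one can phrase this as the two continuous maps $\rho\mapsto f(\rho\,\cdot\,)$ and $\rho\mapsto\sum_{n,k}c_{n,k}g_{n,k}(\rho)f_n$ into $L^2(\partial B)$ agreeing on a set of full measure (hence a dense set) in $[1-\eta,1]$ near $\rho=1$, and therefore agreeing at $\rho=1$ as well.
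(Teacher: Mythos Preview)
Your proof is correct and follows the same route as the paper: use the weighted-$\ell^1$ condition together with Assumptions~\ref{assump}(iii) and~(v) to justify, via dominated convergence, taking the radial limit $r\to 1$ inside the basis expansion of $f$. The paper's proof is a two-line sketch of exactly this, whereas you have carefully unpacked the measure-theoretic details (the $L^\infty(U)$ identification with $f$, the Fubini slicing, and the passage to $r=1$ as a limit of continuous $L^2(\partial B)$-valued maps); your Cauchy--Schwarz detour to bound $|g_{n,k}(1)|$ separately is not needed, since $\|g_{n,k}(1)f_n\|_{L^\infty(\partial B)}\le a_{n,k}$ already follows directly from the radial limit of Assumption~\ref{assump}(v) combined with~(iii).
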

	\begin{proof}
		By the assumed continuity of $f$, we have $f(\theta) = \lim_{r\to 1}f(r\theta)$. 
		The condition $(c_{n,k}) \in \ell_{a_{n,k}}^1$, together with Assumptions~\ref{assump}(iii) and \ref{assump}(v), allows one to apply the Lebesgue dominated convergence theorem and take the radial limit inside the expansion of $f$ in the basis $\{\psi_{n,k}\}_{n,k}$.
	\end{proof}
	
	We now proceed to prove Theorem~\ref{thm:main}. According to Lemma~\ref{lemma:trace},
	\begin{equation}
		\label{eq:g_expand}
		(\gamma_1-\gamma_2)|_{\partial B} = \sum_{n} \Bigl(\sum_k c_{n,k}g_{n,k}(1)\Bigr)f_n.
	\end{equation}
	$\{f_n\}_n$ and $\{\psi_{n,k}\}_{n,k}$ being orthonormal bases for $L^2(\partial B)$ and $W$, respectively, implies
	\begin{equation*}
		\norm{\gamma_1-\gamma_2}_{L^2(\partial B)}^2 = \sum_{n} \Bigl|\sum_k c_{n,k}g_{n,k}(1)\Bigr|^2 
	\end{equation*}
	and
	\begin{equation*}
		\norm{\gamma_1-\gamma_2}_{L^2(B)}^2 = \sum_{n,k} \abs{c_{n,k}}^2.
	\end{equation*}
	By combining these equalities with condition~\eqref{eq:mixedterms}, it follows that 
	\begin{align*}
		\norm{\gamma_1-\gamma_2}_{L^2(B)}^2 &  
		\leq \epsilon^{-2}\sum_n \inf_k\abs{g_{n,k}(1)}^2\Bigl\lvert\sum_k c_{n,k}\Bigr\rvert^2 \\
		&= \epsilon^{-2}\sum_n \inf_k\abs{g_{n,k}(1)}^2\Bigl\lvert\sum_k g_{n,k}(1)^{-1}g_{n,k}(1)c_{n,k}\Bigr\rvert^2 \\
		&\leq \epsilon^{-2}\sum_{n} \Bigl|\sum_k c_{n,k}g_{n,k}(1)\Bigr|^2 \\
		&= \epsilon^{-2}\norm{\gamma_1-\gamma_2}_{L^2(\partial B)}^2 \\[1mm]
		&\leq \epsilon^{-2}\abs{\partial B}\norm{\gamma_1-\gamma_2}_{L^\infty(\partial B)}^2.
	\end{align*}
	The proof is concluded via Theorem~\ref{thm:bdrstab}.
	
	\section{Proof of Theorem~\ref{thm:zernikebnds}} \label{sec:proofthmzernike}
	
	In this proof, let $\inner{\,\cdot\,,\,\cdot\,}$ be the $L^2_{r^{d-1}}([0,1])$ inner product and let $\norm{\,\cdot\,}$ be the associated norm. When we refer to ``orthonormality'' it is to be understood in the sense of $L^2_{r^{d-1}}([0,1])$.
	
	\subsection*{(i)} 
	
	We have already established that $\{R_{\ell,k}\}_{k\in\N_0}$ is an orthonormal set as it is composed of eigenfunctions for the Sturm--Liouville eigenvalue problem \eqref{eq:SL-form}. What remains to be proven is density in $L^2_{r^{d-1}}([0,1])$.  
	
	Due to \eqref{eq:Zernike1}, 
	\begin{equation*}
		\spanm\{R_{\ell,k}\}_{k=0}^p \subseteq \spanm\{r^{\ell+2k}\}_{k=0}^p, \quad p\in\N_0,
	\end{equation*}
	where the latter vector space clearly is $(p+1)$-dimensional. Since $\{R_{\ell,k}\}_{k=0}^p$ are $p+1$ orthonormal functions, they actually form a basis for $\spanm\{r^{\ell+2k}\}_{k=0}^p$. As the weight $r^{d-1}$ is bounded on $(0,1)$, the density of $\{R_{\ell,k}\}_{k\in \N_0}$ in $L^2_{r^{d-1}}([0,1])$ is a consequence of the density of 
	\begin{equation*}
		\spanm\{\, r^{\ell+2k} \mid k\in\N_0 \,\}
	\end{equation*}
	in $L^2([0,1])$. The latter was proved in \cite[Lemma~6.1]{GH2024}.
	
	\subsection*{(ii)}  
	
	For a fixed $\ell\in\N_0$ we define
	\begin{align*}
		F_k(r) &= \F(-k,\ell+k+\tfrac{d}{2};\ell+\tfrac{d}{2};r^2), \\
		\widetilde{F}_k(r) &= \F(-(k-1),\ell+k+\tfrac{d}{2};\ell+\tfrac{d}{2};r^2).
	\end{align*}
	Note that $R_{\ell,k} = C_{\ell,k}r^\ell F_k$. Because the case $k=0$ follows directly from $F_0\equiv 1$, we may assume that $k\in\N$ in the following. 
	
	Consider the identity \cite[§15.5(ii) Eq.~15.5.12]{NIST} (with $a+1$ instead of $a$ and $b-1$ instead of $b$):
	\begin{equation*}
		(b-1)\F(a+1,b;c;z) = (b-a-2)\F(a+1,b-1;c;z) + (a+1)\F(a+2,b-1;c;z),
	\end{equation*}
	which in the context of this proof gives
	\begin{equation} \label{eq:Fjident}
		\widetilde{F}_j = \frac{\ell+2j+\tfrac{d-2}{2}-1}{\ell+j+\tfrac{d-2}{2}}F_{j-1} - \frac{j-1}{\ell+j+\tfrac{d-2}{2}}\widetilde{F}_{j-1}.
	\end{equation}
	Since $r^{\ell}F_{j-1}$ and $r^{\ell}F_k$ are orthogonal for $j=1,\dots,k$ by virtue of the the Sturm--Liouville eigenvalue problem \eqref{eq:SL-form}, we have
	\begin{equation} \label{eq:Fjident2}
		\inner{r^{\ell}\widetilde{F}_j,r^{\ell}F_k} = -\frac{j-1}{\ell+j+\tfrac{d-2}{2}}\inner{r^{\ell}\widetilde{F}_{j-1},r^{\ell}F_k}, \quad j=1,\dots,k.
	\end{equation}
	As \eqref{eq:Fjident2} vanishes for $j=1$, by induction it follows that
	\begin{equation} \label{eq:Fjip}
		\inner{r^{\ell}\widetilde{F}_j,r^{\ell}F_k} = 0,\quad j=1,\dots,k.
	\end{equation}
	
	Next, we need the following identity that is based on \cite[§15.5(ii) Eq.~15.5.16\_5]{NIST} (with $a+1$ instead of $a$) and \cite[§15.5(i) Eq.~15.5.1]{NIST}:
	\begin{equation*}
		z\frac{\di}{\di z}\F(a,b;c;z) = a\bigl(\F(a+1,b;c;z)-\F(a,b;c;z)\bigr).
	\end{equation*}
	Setting $z=r^2$ leads to
	\begin{equation} \label{eq:diffFk}
		r\frac{\di}{\di r}F_k = -2k\bigl(\widetilde{F}_k-F_k\bigr)
	\end{equation}
	in the context of this proof.
	Using integration by parts and the formulas \eqref{eq:Fjip} and \eqref{eq:diffFk}, 
	\begin{align*}
		d\norm{r^{\ell}F_k}^2 &= d\int_0^1 (r^\ell F_k)^2 r^{d-1}\,\di r \\
		&= F_k(1)^2 - 2\int_0^1 (r^\ell F_k)\Bigl(\ell r^{\ell-1}F_k+r^\ell \frac{\di}{\di r}F_k \Bigr)r^d\,\di r \\
		&= F_k(1)^2 - (2\ell+4k)\norm{r^\ell F_k}^2,
	\end{align*}
	meaning that
	\begin{equation} \label{eq:rFnorm}
		\norm{r^\ell F_k}^2 = \frac{F_k(1)^2}{2\ell+4k+d}.
	\end{equation}
	By the Chu--Vandermonde identity \cite[§15.4(ii) Eq.~15.4.24]{NIST},
	\begin{equation*}
		F_k(1) = \frac{(-k)_k}{(\ell+\tfrac{d}{2})_k} = (-1)^k\binom{\ell+k+\tfrac{d-2}{2}}{k}^{-1},
	\end{equation*}
	which together with \eqref{eq:rFnorm} proves the claimed formulas for $C_{\ell,k}$ and $R_{\ell,k}(1)$.
	
	\subsection*{(iii)}  
	
	First of all, it is clear from \eqref{eq:Zernike1} that $R_{0,0}$ is a constant function. 
	
	Consider the derivative
	\begin{equation} \label{eq:Rderiv}
		\frac{\di}{\di r}R_{\ell,k} = C_{\ell,k}\ell r^{\ell-1}F_k + C_{\ell,k}r^{\ell}\frac{\di}{\di r}F_k. 
	\end{equation}
	As we have already proved that $C_{\ell,k}F_k(1)>0$, the first term on the right-hand side vanishes for $\ell = 0$ and is otherwise positive at $r=1$.
	Since $F_0\equiv 1$ and we have excluded the case $\ell = k = 0$, it is thus sufficient to prove that $C_{\ell,k} \frac{\di}{\di r}F_k(1) > 0$ for $\ell \in \N_0$ and $k\in\N$. 
	
	The identity \cite[§15.5(i) Eq.~15.5.1]{NIST}
	\begin{equation*}
		\frac{\di}{\di z}\F(a,b;c;z) = \frac{ab}{c}\F(a+1,b+1;c+1;z)
	\end{equation*}
	implies in our context that
	\begin{equation*}
		C_{\ell,k} \frac{\di}{\di r}F_k(1) =  \frac{-2k(\ell+k+\tfrac{d}{2})}{\ell+\tfrac{d}{2}}C_{\ell,k}\frac{(-k)_{k-1}}{(\ell+1+\tfrac{d}{2})_{k-1}},
	\end{equation*}
	where the Chu--Vandermonde identity \cite[§15.4(ii) Eq.~15.4.24]{NIST} was used once again. Note that the term $(-k)_{k-1}$ contributes with the sign $(-1)^{k-1}$, which combined with the leading $-1$ and the sign $(-1)^k$ from $C_{\ell,k}$ proves the claim.
	
	\subsection*{(iv)}  
	
	We have
	\begin{equation*}
		\max_{r\in[0,1]}\abs{R_{\ell,k}(r)} \leq \max_{r\in[0,1]}\abs{C_{\ell,k}F_k(r)}.
	\end{equation*}
	According to the definition of $F_k$, it can alternatively be written as a Jacobi polynomial \cite[§18.5(iii) Eq.~18.5.7]{NIST}, 
	\begin{equation} \label{eq:Jacobi}
		F_k(r) = \widetilde{C} P^{(\ell + d/2 -1,0)}_k(1-2r^2),
	\end{equation}
	where $\widetilde{C}$ is an inconsequential constant. By \eqref{eq:Jacobi} and \cite[Theorem~3.24]{Shen_2011}, the maximum of $\abs{F_k(r)}$ is thus attained at either $r=0$ or $r=1$, for which we have:
	\begin{equation*}
		\abs{C_{\ell,k}F_k(0)} = \abs{C_{\ell,k}} \geq R_{\ell,k}(1) = \abs{C_{\ell,k}F_k(1)}. 
	\end{equation*}
	
	\subsection*{(v)}  
	
	It suffices to prove $\inner{r^{\ell+2p},R_{\ell,k}} = \chi_{\ell,k,p}$ for $\ell,p\in\N_0$ and $k\in\{0,\dots,p\}$ since we know from the proof of part (i) that
	\begin{equation*}
		r^{\ell+2p} = \sum_{k=0}^p \inner{r^{\ell+2p},R_{\ell,k}}R_{\ell,k}(r).
	\end{equation*}
	For $k=0$,
	\begin{equation*}
		\inner{r^{\ell+2p},R_{\ell,0}} = \inner{r^{\ell+2p},C_{\ell,0}r^\ell} = \frac{C_{\ell,0}}{2p+2\ell+d} = \chi_{\ell,0,p}.
	\end{equation*}
	Since this covers all the cases when $p=0$, we may assume $p\in\N$ in the following and prove the remaining cases using induction in $k$. Hence for $k\in\{1,\dots,p\}$, assume the induction hypothesis
	\begin{equation*}
		C_{\ell,k-1}\inner{r^{\ell+2p},r^\ell F_{k-1}} = \inner{r^{\ell+2p},R_{\ell,k-1}} = \chi_{\ell,k-1,p}.
	\end{equation*}
	Define
	\begin{equation*}
		c_j = \inner{r^{\ell+2p},r^\ell F_j} \quad \text{and} \quad \tilde{c}_j = \inner{r^{\ell+2p},r^\ell \widetilde{F}_j},
	\end{equation*}
	meaning that
	\begin{equation} \label{eq:rsumF}
		r^{\ell+2p} = \sum_{j=0}^p C_{\ell,j}^2c_j r^\ell F_j(r).
	\end{equation}
	By using integration by parts and \eqref{eq:diffFk}, 
	\begin{align*}
		dc_j &= d\int_0^1 \Bigl(r^{2\ell+2p}F_j\Bigr)r^{d-1}\,\di r \\
		&= F_j(1) - (2p+2\ell)\int_0^1 \Bigl(r^{2\ell+2p-1}F_j\Bigr)r^{d}\,\di r - \int_0^1 \Bigl(r^{2\ell+2p}\frac{\di}{\di r}F_j\Bigr)r^{d}\,\di r \\
		&= F_j(1) - (2p+2\ell)c_j + 2j\tilde{c}_j-2jc_j,
	\end{align*}
	which can be rearranged as
	\begin{equation} \label{eq:Finduct}
		(2p+2\ell+2j+d)c_j = F_j(1) + 2j\tilde{c}_j. 
	\end{equation}
	We will return to \eqref{eq:Finduct} a bit later. 
	
	Meanwhile, using \eqref{eq:rsumF} and \eqref{eq:Fjip} in the definition of $\tilde{c}_k$ gives
	\begin{equation*} 
		\tilde{c}_k = \sum_{j=0}^{k-1} C_{\ell,j}^2c_j\inner{r^{\ell}F_j,r^\ell\widetilde{F}_k}.
	\end{equation*}
	Formulas \eqref{eq:Fjident}, \eqref{eq:Fjip}, and the orthonormality of $R_{\ell,j}=C_{\ell,j}r^\ell F_j$ yield 
	\begin{align*}
		\tilde{c}_k &= \frac{\ell+2k+\frac{d-2}{2}-1}{\ell+k+\frac{d-2}{2}}c_{k-1} - \frac{k-1}{\ell+k+\frac{d-2}{2}}\sum_{j=0}^{k-2}C_{\ell,j}^2c_j\inner{r^\ell F_j,r^\ell\widetilde{F}_{k-1}} \\
		&= \frac{\ell+2k+\frac{d-2}{2}-1}{\ell+k+\frac{d-2}{2}}c_{k-1} - \frac{k-1}{\ell+k+\frac{d-2}{2}}\tilde{c}_{k-1}.
	\end{align*}
	Inserting the expression for $\tilde{c}_{k-1}$ from \eqref{eq:Finduct} results in
	\begin{align*}
		\tilde{c}_k &= \frac{(k-p-1)c_{k-1}+\frac{1}{2}F_{k-1}(1)}{\ell+k+\frac{d-2}{2}},
	\end{align*}
	which the induction hypothesis further transforms into
	\begin{align}
		\label{eq:formulax}
		2C_{\ell,k-1}\tilde{c}_k &= \frac{(2k-2p-2)\chi_{\ell,k-1,p}+R_{\ell,k-1}(1)}{\ell+k+\frac{d-2}{2}}.
	\end{align}

	Finally we return to \eqref{eq:Finduct} with $j=k$, multiply by $C_{\ell,k}/R_{\ell,k}(1)$, and employ \eqref{eq:formulax}:
	\begin{align}
		\frac{2p+2\ell+2k+d}{R_{\ell,k}(1)}\inner{r^{\ell+2p},R_{\ell,k}} &= 1 + \frac{2kC_{\ell,k}}{R_{\ell,k}(1)}\tilde{c}_k \notag\\ 
		&= 1 + \frac{kC_{\ell,k}}{R_{\ell,k}(1)C_{\ell,k-1}}\Bigl(\frac{(2k-2p-2)\chi_{\ell,k-1,p}+R_{\ell,k-1}(1)}{\ell+k+\frac{d-2}{2}}\Bigr) \notag\\
		&= 1+A(B-1). \label{eq:monomialfinal}
	\end{align}
	Due to the expressions for $C_{\ell,k}$ and $C_{\ell,k-1}$ from part (ii) and the definition of $\chi_{\ell,k-1,p}$, $A$ and $B$ can be written as:
	\begin{align*}
		A &= \frac{k}{\ell+k+\frac{d-2}{2}}\binom{\ell+k+\frac{d-2}{2}}{k}\binom{\ell+k-1+\frac{d-2}{2}}{k-1}^{-1} = 1, \\
		B &= \frac{p+1-k}{p+\ell+k+\frac{d-2}{2}}\binom{p}{k-1}\binom{p+\ell+k-1+\frac{d-2}{2}}{k-1}^{-1} = \binom{p}{k}\binom{p+\ell+k+\frac{d-2}{2}}{k}^{-1}.
	\end{align*}
	Hence, \eqref{eq:monomialfinal} reduces to $\inner{r^{\ell+2p},R_{\ell,k}} = \chi_{\ell,k,p}$.
	
	\subsection*{Acknowledgements}
	
	We thank Matteo Santacesaria (University of Genoa) for early discussions related to the subject of the paper in connection with Corollary~\ref{cor1}.
	
	HG is supported by grant 10.46540/3120-00003B from Independent Research Fund Denmark. MH and NH are supported by the Research Council of Finland (decisions 353081 and 359181).
	
	\appendix
	
	\section{Bounds on spherical harmonics} \label{sec:appA}
	
	We prove Proposition~\ref{prop:sphericalhambnds}, that for $f\in\mathcal{H}_\ell$ there is the sharp bound
	\begin{equation*}
		\norm{f}_{L^\infty(\partial B)} \leq \Bigl(\frac{\dim\mathcal{H}_\ell}{\abs{\partial B}}\Bigr)^{1/2}\norm{f}_{L^2(\partial B)}.
	\end{equation*}
	The result can also be found in lecture notes by Paul Garrett, University of Minnesota. However since we have no reference to the result in a permanent archive, we give our adaptation of the proof here for the sake of completion.
	
	Let $\inner{\,\cdot\,,\,\cdot\,}$ be the $L^2(\partial B)$ inner product. Since $\mathcal{H}_\ell$ is finite-dimensional and comprised of bounded continuous functions, the norms $\norm{\,\cdot\,}_{L^\infty(\partial B)}$ and $\norm{\,\cdot\,}_{L^2(\partial B)}$ are equivalent on $\mathcal{H}_\ell$. Hence, for a fixed $x\in\partial B$, the mapping $f\mapsto f(x)$ is a linear and bounded functional on $\mathcal{H}_\ell$ in the topology of $L^2(\partial B)$. By Riesz' representation theorem, for each $x\in\partial B$ there exists a unique $v_x\in \mathcal{H}_\ell$ such that
	\begin{equation*}
		f(x) = \inner{f,v_x}, \quad f\in \mathcal{H}_\ell.
	\end{equation*}
	Note that $\Delta_{\partial B}$ commutes with orthogonal coordinate transformations. So for any orthogonal matrix $U\in\R^{d\times d}$, the mapping $f\mapsto f\circ U$ is unitary on $\mathcal{H}_\ell$ with adjoint $f\mapsto f\circ U^{-1}$. Hence,
	\begin{equation*}
		\inner{f,v_{Ux}} = f\circ U(x) = \inner{f\circ U,v_x} = \inner{f,v_x\circ U^{-1}}, \quad f\in \mathcal{H}_\ell,
	\end{equation*}
	which gives the identification
	\begin{equation} \label{eq:vunitary0} 
		v_{Ux} = v_x\circ U^{-1}.
	\end{equation}
	In particular,
	\begin{equation} \label{eq:vunitary}
		v_x(z) = \inner{v_x,v_z} = \inner{v_x\circ U^{-1},v_z\circ U^{-1}} = \inner{v_{Ux},v_{Uz}} = v_{Ux}(Uz),
	\end{equation}
	which demonstrates that $x\mapsto v_x(x)$ is a constant function that we denote as
	\begin{equation*}
		V = v_x(x), \quad x\in\partial B.
	\end{equation*}
	Indeed, for any $x,y\in\partial B$, choosing in \eqref{eq:vunitary} $z = x$ and an orthogonal matrix $U$ such that $y = Ux$ gives $v_x(x) = v_y(y)$. An example of such a matrix is
	\begin{equation*}
		U = \begin{pmatrix}
			y & \widetilde{y}_2 & \cdots & \widetilde{y}_d
		\end{pmatrix}
		\begin{pmatrix}
			x & \widetilde{x}_2 & \cdots & \widetilde{x}_d
		\end{pmatrix}^\textup{T},
	\end{equation*}
	where $\{\widetilde{y}_i\}_{i=2}^d$ is an orthonormal basis for $\spanm\{y\}^\perp$, and $\{\widetilde{x}_i\}_{i=2}^d$ is an orthonormal basis for $\spanm\{x\}^\perp$.
	
	Next we will show that  
	\begin{equation} \label{eq:Veq}
		V = \norm{v_x}_{L^2(\partial B)}^2 = \norm{v_x}_{L^\infty(\partial B)}
	\end{equation}
	for any $x\in\partial B$. The first equality follows from the definition of $v_x$ by setting $f = v_x$. To deduce the second equality, we write 
	\begin{align}
		\abs{v_x(y)} &= \abs{\inner{v_x,v_y}} \leq \norm{v_x}_{L^2(\partial B)}\norm{v_y}_{L^2(\partial B)} \notag\\
		&= \norm{v_x}_{L^2(\partial B)}\norm{v_x\circ U^{-1}}_{L^2(\partial B)} = \norm{v_x}_{L^2(\partial B)}^2 = V, \label{eq:normvx}
	\end{align}
	where we again used an orthogonal matrix $U$ satisfying $y = Ux$ together with \eqref{eq:vunitary0}. As the upper bound in \eqref{eq:normvx} is attained        when $y=x$, we have proved \eqref{eq:Veq}.
	
	Finally, we expand $v_x$ in the finite orthonormal basis $\{f_{\ell,m}\}_{m\in\mathcal{I}_\ell}$,
	\begin{equation*}
		v_x = \sum_{m\in\mathcal{I}_\ell}\inner{v_x,f_{\ell,m}}f_{\ell,m} = \sum_{m\in\mathcal{I}_\ell}\overline{f_{\ell,m}(x)}f_{\ell,m},
	\end{equation*}
	which evaluated at $x$ gives
	\begin{equation*}
		V = \sum_{m\in \mathcal{I}_\ell}\abs{f_{\ell,m}(x)}^2
	\end{equation*}
	for any $x\in\partial B$. Integrating this expression results in
	\begin{equation} \label{eq:Vdim}
		V\abs{\partial B} = \sum_{m\in \mathcal{I}_\ell}\norm{f_{\ell,m}}_{L^2(\partial B)}^2 = \dim\mathcal{H}_\ell.
	\end{equation}
	Using \eqref{eq:Veq} and \eqref{eq:Vdim} proves the proposition statement,
	\begin{align*}
		\norm{f}_{L^\infty(\partial B)} &= \sup_{x\in\partial B}\abs{\inner{f,v_x}} \leq \sup_{x\in\partial B}\norm{v_x}_{L^2(\partial B)}\norm{f}_{L^2(\partial B)} \\
		&= V^{1/2}\norm{f}_{L^2(\partial B)} = \Bigl(\frac{\dim\mathcal{H}_\ell}{\abs{\partial B}}\Bigr)^{1/2}\norm{f}_{L^2(\partial B)}.
	\end{align*}
	Moreover, \eqref{eq:Veq} shows that the bound is sharp by choosing $f = v_x$ for any $x\in\partial B$.
	
	\bibliographystyle{plain}

\end{document}